\theoremstyle{definition}
\newtheorem{defi}{Definition}[section]
\theoremstyle{plain}
\newtheorem{theo}{Theorem}[section]
\newtheorem{prop}[theo]{Proposition}
\newtheorem{lem}[theo]{Lemma}
\theoremstyle{remark}
\newtheorem{rem}{Remark}[section]
\theoremstyle{definition}
\begin{document}
\title{The Maker-Breaker directed triangle game}
\author{
  \textbf{Hrishikesh Jagtap} \\
  Department of Mathematics and Computer Science, Eindhoven University of Technology, Eindhoven, Netherlands
 \\
  \texttt{h.p.jagtap@tue.nl}
  \and
  \textbf{Moumanti Podder} \\
  Department of Mathematics, Indian Institute of Science Education and Research, Pune, Maharashtra, India
 \\
  \texttt{moumanti@iiserpune.ac.in}
}

\maketitle
\thispagestyle{firstpage}

\begin{abstract}
    In this work, we investigate Maker-Breaker directed triangle games, a directionally constrained variant of the classical Maker-Breaker triangle game. Our board of interest is a tournament, and the winning sets constitute all directed triangles (i.e.\ $3$-cycles) present in the tournament. We begin by studying the Maker-Breaker directed triangle game played on a specially defined tournament called the parity tournament, and we identify the board size threshold to be $n=7$, which is to say that if the size (i.e.\ the number of vertices) of the parity tournament equals $n$, Breaker has a winning strategy for $3\le n< 7$, while Maker can ensure a win for herself for $n\ge 7$. For the $(1:b)$ biased version of this game, wherein Breaker is allowed to claim $b$ directed edges of the tournament in each of her turns while Maker is allowed to claim only one directed edge in each of hers, we prove that the bias threshold $b^*(n)$ satisfies $\sqrt{\left(1/12+o(1)\right)\ n}\le b^{*}(n) \le\sqrt{\left(8/3+o(1)\right)\ n}$, which matches the order of magnitude (i.e.\ $\sqrt{n}$) of the bias threshold for the undirected counterpart of this game. Next, we consider the game on random tournaments $T(n,p)$, wherein the vertices are labeled $1,2,\ldots,n$, and the edge between $i$ and $j$, for each $i<j$, is directed from $i$ towards $j$ with probability $p$, independent of all else. We prove that Maker wins this game with probability approaching $1$ as $n \to \infty$ for any fixed $p \in (0,1)$. Extending the notion of `bias' from undirected games to our directed framework, we introduce the flip-biased Maker-Breaker directed triangle game on the parity tournament with flip budget $\kappa(n)$, where we allow Breaker to strategically flip the directions of at most $\kappa(n)$ edges of the tournament before the game begins (once again, $n$ indicates the size of the tournament). We show that the flip-bias threshold $\kappa^*(n)$ for this game is of order $n^2$. More precisely, for odd $n\ge 11$ we show $n(n-11)/12 \le \kappa^*(n) \le (n^2-1)/8$, and for even $n \ge 14$ we show $n(n-14)/12 + 1\le \kappa^*(n) \le n^2/8+ n/4-1$.
\end{abstract}

\section{Introduction}

Let $X$ be a finite set which we call the \emph{board}, and let $\mathcal{F}$ be a family of subsets of $X$ referred to as a \emph{family of winning sets} (the elements of $\mathcal{F}$ are referred to as the \emph{winning sets}).  A \emph{positional game}, indicated by $(X,\mathcal{F)}$, is a \emph{two-player combinatorial game} played on the board $X$, with the two players taking turns to claim yet-unclaimed elements of $X$. The game ends when all elements of $X$ have been claimed. When we focus on the special class of games called the \emph{Maker-Breaker positional games}, these two players are called, as the name suggests, \emph{Maker} and \emph{Breaker}. Maker's objective is to claim all the elements of at least one winning set $F \in \mathcal{F}$, while Breaker tries to prevent her from doing so. Maker wins this game if she is able to accomplish her objective before the end of the game, and Breaker wins otherwise. We shall assume that Maker starts the game by playing the first round. The board $X$ as well as the family $\mathcal{F}$ of winning sets is revealed to the two players \emph{before} the game begins, making it a \emph{perfect information game}, and the players have complementary goals, thus implying that there can be no draw. We refer the reader to \cite{beck2008combinatorial},\cite{Hefetz2014-mg} and \cite{krivelevich2014positional} for an introduction to the key results and topics in this field.

A special class of such games is the Maker-Breaker positional game \emph{on graphs}, where the underlying board $X$ is the edge-set, $E$, of a given graph $G=(V,E)$, and the family $\mathcal{F}$ of winning sets comprises the subsets of edges contributing to the formation of a prespecified structure. In other words, Maker's objective in such a game is to ensure that the subgraph induced by the edges claimed by her contains this prespecified structure. For instance, $\mathcal{F}$ may be the set of all triangles (\cite{chvatal1978biased}, \cite{balogh2011chvatal}, \cite{glazik2022new}), all $k$-cliques for a fixed but arbitrary positive integer $k$ (\cite{beck2008combinatorial}, \cite{bednarska2000biased}), all spanning trees (\cite{chvatal1978biased}, \cite{beck1982remarks}), all Hamiltonian cycles (\cite{beck1985random}, \cite{krivelevich2011critical}) etc. 

So far, we have assumed that each player claims a single edge during each of her turns, and such a version of these games is referred to as an \emph{unbiased game}. However, many of these unbiased games happen to be easy wins for Maker. To level the playing field, we may instead allow Breaker to claim $b$ edges during each of her turns, for some pre-decided positive integer $b$, while allowing Maker to claim a single edge in each of her turns. More generally, we consider the \emph{biased} $(a:b)$ Maker-Breaker positional game, where Maker claims exactly $a$ edges in each of her turns, and Breaker claims exactly $b$ edges in each of hers. In particular, $a=b=1$ represents the unbiased game discussed above. Going forward, we shall assume a game to be unbiased unless stated otherwise. A natural question that arises in such biased games, particularly for the $(1:b)$  biased game, is as follows: what is the minimum value $b^{*}(n)$ of the bias $b$, where $n$ is the size of the graph $G$ (i.e.\ $|V|=n$), at which the game transitions from being a Maker's win to a Breaker's win (i.e.\ for $b < b^{*}(n)$, Maker wins, whereas for $b\ge b^{*}(n)$, Breaker wins)? The value $b^*(n)$ is called the \emph{bias threshold} for that game. 

Our work considers a framework with additional \emph{directional constraints} on the winning sets of the traditional Maker-Breaker positional game played on graphs. In particular, we study a `directionally constrained' variant of the usual triangle game (\cite{chvatal1978biased}, \cite{balogh2011chvatal}, \cite{glazik2022new}), i.e.\ we consider directed triangles (or $3$-cycles) as our winning sets, and our board is a \emph{tournament} (i.e.\ a complete directed graph) instead of an undirected graph. For the unbiased version of this directed triangle game on the \emph{parity tournament} $\Pi(n)$ (a special type of tournament which is regular for odd $n$ and near-regular for even $n$), we proved that for $n\le 6$, Breaker has a winning strategy for the game played on $\Pi(n)$,while for all $n \ge 7$, Maker has a winning strategy. We also identify that the bias threshold for this directed triangle game on the parity tournament is of the order $\sqrt{n}$, matching the order of its undirected counterpart (as seen in \cite{chvatal1978biased}), and provide an upper and lower bound on $b^*(n)$.

Moving away from the parity tournament, we address the Maker-Breaker directed triangle game played on a random tournament $T(n,p)$ in which, for every pair $i,j$ of vertices with $i<j$, the edge between $i$ and $j$ is oriented, independent of all else, \emph{from} $i$ \emph{towards} $j$ with probability $p$, and \emph{from} $j$ \emph{towards} $i$ with probability ($1-p$), where $p\in (0,1)$ is a given constant. We show that Maker wins with probability tending to $1$ as the board size (i.e.\ the size of the tournament, $n$) tends to infinity, in this particular game.

Finally, we introduce a new notion of bias, called the \emph{flip-bias}, for the directed triangle game on the parity tournament, whereby Breaker is allowed to preemptively and strategically flip the orientations of at most $\kappa(n)$ edges \emph{before} the game begins. These perturbations allow Breaker some additional advantage on the resultant board. For the parity tournament of size $n$, we define the \emph{flip-bias threshold} $\kappa^*(n)$ to be the minimum number of edge flips required by Breaker to guarantee a win on the perturbed board (that is, if $\kappa(n) < \kappa^*(n)$, then Maker has a winning strategy, while if $\kappa(n) \ge \kappa^*(n)$ then Breaker has a winning strategy). We prove that $\kappa^*(n)$ grows quadratically with $n$. More precisely, for odd $n\ge 11$ we show $\frac{n(n-11)}{12} \le \kappa^*(n) \le \frac{n^2-1}{8}$, and for even $n\ge 14$ we show $\frac{n(n-14)}{12} + 1\le \kappa^*(n) \le \frac{n^2}{8}+\frac{n}{4}-1$.

\subsection{Literature Review}

Erdős and Selfridge in \cite{erdos1973combinatorial} introduced the notion of Maker-Breaker games, along with the use of potential functions for game analysis, and the formulation of derandomisation techniques to construct explicit winning strategies.  The Erdős-Selfridge Theorem from \cite{erdos1973combinatorial} is a cornerstone result that provided a criterion for Breaker's win in unbiased games, which kicked off a cascade of very exciting results in this field of work. Later, Chvátal and Erdős in  \cite{chvatal1978biased} introduced bias into the game framework by allowing Breaker to claim additional moves in each of her turns, thereby proposing a natural question of identifying the bias threshold. Over the past few decades, J\'{o}zsef Beck, via his many papers (\cite{beck1981positional}, \cite{beck1982remarks}, \cite{beck1985random}, \cite{beck1994deterministic}) and an exhaustive monograph (\cite{beck2008combinatorial}), has proved many insightful results and generalisations of the previous work. These works address a variety of winning substructures that Maker aims to capture, like triangles, Hamiltonian cycles, spanning trees, cliques of fixed sizes, etc. For an overarching introduction to the topic of positional games and its main results, we refer the reader to \cite{Hefetz2014-mg}.
The triangle game was previously studied by Chvátal and Erdős in \cite{chvatal1978biased}, where they proved that the corresponding bias threshold lies between $\sqrt{2n}$ and $2\sqrt{n}$, where $n$ is the size of the board. Balogh and Samotij in \cite{balogh2011chvatal} improved the upper bound from \cite{chvatal1978biased} to $1.958\sqrt{n}$ using a randomised strategy. Recently, Glazik and Srivastav in \cite{glazik2022new} further improved this upper bound to $1.633\sqrt{n}$. Tightening these bounds remains a problem that still eludes us.

Our motivation for addressing the Maker-Breaker directed triangle game stems from the intention of introducing directional constraints to the winning sets and analysing similar threshold phenomena and potential differences with their undirected counterparts.
Games of a similar flavour have been addressed in  \cite{frieze2021maker}, which analysed the strong connectivity game and the Hamiltonicity game played on a \emph{complete digraph}.
Other related games played on undirected boards, but with some directional aspect incorporated into them, include the \emph{tournament game} (studied in \cite{clemens2016random} and  \cite{clemens2015remark}), where the board is an undirected complete graph $K_n$ or the Erd\H{o}s -R\'{e}nyi random graph  $G(n,p)$: whenever Maker claims an undirected edge, she also assigns it a direction, and Maker wins if her set of claimed directed edges contains a prespecified tournament $T_k$. Another class of games with directional constraints are referred to as the \emph{orientation games} ( \cite{bollobas1998oriented},  \cite{ben2012biased},  \cite{clemens2017non}), in which players alternately orient the undirected edges (that have not been previously oriented by either player) of $K_n$ to produce a single final tournament: Maker wins if that final tournament has a property $P$ (which is concerned with the presence of a prespecified winning substructure).

\subsection{Notation and terminology}\label{Subsection:Notation}

Our board of interest for the games discussed in this work are \emph{tournaments}, which are complete graphs endowed with \emph{directed edges.} Given a vertex set $V_{n}=\{1,2,\ldots,n\}$, we denote by $(i,j)$ the \emph{directed edge} that goes \emph{from} the vertex $i$ \emph{to} the vertex $j$, for all distinct $i,j \in V_{n}$. 

\begin{defi}[Tournament]\label{defi:Tournament}
     A \emph{complete digraph} or a \emph{tournament}, $T(n)$, on $V_{n}$ is a graph in which precisely one of the two directed edges, $(i,j)$ and $(j,i)$, is present, for all distinct $i,j \in V_{n}$, there are no self-loops nor any parallel edges.
\end{defi}

We denote by $E(T(n))$ the set of all directed edges present in $T(n)$. For any three distinct vertices $i, j, k \in V_{n}$, we say that $(i,j,k)$ forms a \emph{directed triangle} if the directed edges $(i,j)$, $(j,k)$ and $(k,i)$ are all present in $E(T(n))$.

We can now define the \emph{unbiased Maker-Breaker directed triangle game} on a tournament.

\begin{defi}[(1:1) Maker–Breaker directed triangle game]\label{defi: MB directed triangle game}
Let $T(n)$ be a tournament on the vertex set given by $V_n$. The board on which the Maker-Breaker directed triangle game is played is the edge set $E(T(n))$ of the tournament, and the winning sets are the collection of triples of edges corresponding to all possible directed triangles in $T(n)$. Each winning set comprises the three edges that form the respective directed triangle. We can describe the unbiased (1:1) Maker-Breaker directed triangle game on any such tournaments $T(n)$ as follows:
\begin{enumerate}
    \setlength{\itemsep}{0pt}
    \item Maker plays the first move and claims one edge from the tournament;
    \item Breaker plays next and deletes one edge from the tournament;
    \item this goes on alternately until all the edges of the tournament have been exhausted;
    \item the winner is:\\- Maker, if she manages to claim a directed triangle by the end of the game, \\- Breaker, if she prevents Maker from claiming such a directed triangle.
\end{enumerate}
\end{defi}
We will assume that any game referred to here onward is unbiased and that Maker is the first player by default, unless stated otherwise.

In particular, we study different \emph{types} of tournaments as our underlying boards for the directed triangle game.
To begin with, we first focus on \emph{parity tournaments} $\Pi(n)$ in which the direction of each edge is dictated by a certain parity-based rule. 

\begin{defi}[Parity tournament]\label{defi:Parity rule}
    In the parity tournament $\Pi(n)$, for any two vertices $i, j \in V_n$ with $i < j$, the direction of the edge between $i$ and $j$ is defined as follows:
    \begin{itemize}
        \item If $(i+j)$ is odd, then $(i,j) \in E(\Pi(n))$ (i.e., the edge is directed from $i$ to $j$).
        \item If $(i+j)$ is even, then $(j,i) \in E(\Pi(n))$ (i.e., the edge is directed from $j$ to $i$).
    \end{itemize}
\end{defi}

\begin{defi}[Transitive tournament]\label{defi:transitive tournament}
A tournament is called \emph{transitive} if there exists an ordering of its vertices
$(v_1,\ldots,v_n)$ such that for every $1\le i<j\le n$, the edge between $v_i$ and $v_j$
is oriented from $v_i$ to $v_j$.
Equivalently, $T(n)$ contains no directed cycle.

We write $\Lambda(n)$ for the transitive tournament on $V_n$ in which every edge is oriented
from the smaller label to the larger label, that is, $(i,j)\in E(\Lambda(n))$ if and only if $i<j$.
\end{defi}

\begin{defi}[Random tournament]\label{defi:randomtournament}
Fix $p\in(0,1)$. The random tournament $T(n,p)$ on vertex set $V_n$ is obtained by independently orienting each edge
$\{i,j\}$ with $i<j$ from $i$ to $j$ with probability $p$, and from $j$ to $i$ with probability $1-p$.
\end{defi}

\section{Main Results} \label{section:Main_Results}

In this section, we state our main findings concerning the Maker–Breaker directed triangle game. Our results can naturally divide into three parts. In the first part we consider the game played on the parity tournament, in the second part we study the game on a random tournament with constant edge-orientation probability, and in the third part we introduce the \emph{flip-bias} mechanism, which endows Breaker with additional power by allowing her to flip the orientations of at most $\kappa(n)$ edges before the game begins.

\subsection{Maker–Breaker directed triangle game on the parity tournament}\label{subsec:paritytournament}

We now state our results for the parity tournament $\Pi(n)$ from Definition~\ref{defi:Parity rule}.
Our first theorem addresses the size threshold for the board to turn the game into a Maker's win in the unbiased case and the second one identifies the bounds on the bias threshold for the biased $(1:b)$ game, both considered on the parity tournament.

\begin{theo}\label{thm:thresholdunbiased}
For the Maker–Breaker directed triangle game played on the parity tournament $\Pi(n)$:
\begin{itemize}
    \item If $n\le 6$, then Breaker has a winning strategy.
    \item If $n \ge 7$, then Maker has a winning strategy.
\end{itemize}
\end{theo}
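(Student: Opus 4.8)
The proof naturally splits into two regimes. For the small cases $n \le 6$, the plan is to exhibit an explicit Breaker strategy, leveraging the structure of $\Pi(n)$. First I would catalogue the directed triangles of $\Pi(n)$ for each $n \in \{3,4,5,6\}$: the parity rule is very rigid, and one checks that three vertices $i<j<k$ form a directed $3$-cycle only when the parities are mixed in a specific way (two of the three pairwise sums odd, one even, arranged cyclically), so the number of directed triangles is small and they share many edges. Then, since there are only finitely many triangles and a small edge set (at most $\binom{6}{2}=15$ edges), Breaker's task reduces to hitting every winning set. I would look for a small "transversal" of edges — a set of edges meeting every directed triangle — that Breaker can secure; because Maker moves first, the cleanest argument is a pairing strategy: pair up edges so that each directed triangle contains a full pair, and have Breaker respond within the pair Maker just touched. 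For $n=6$ this pairing (or a short ad hoc case analysis on Breaker's first few responses) is the crux of the small-$n$ side; it is routine but must be done carefully since $n=6$ is the boundary case.

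For $n \ge 7$, the plan is to give Maker an explicit winning strategy, and here the natural tool is the \emph{double-threat} (fork) technique standard in Maker–Breaker triangle games: Maker aims to reach a position where she has claimed two edges $(u,v)$ and $(v,w)$ of a potential directed triangle, with \emph{two} different vertices $x$ such that claiming the appropriate closing edges would complete a directed triangle through already-favourable configurations — more precisely, Maker builds a structure in which she threatens to complete two distinct directed triangles using two disjoint unclaimed edges, so Breaker cannot block both. The work is to show $\Pi(7)$ (hence $\Pi(n)$ for all $n \ge 7$, by playing inside a $7$-vertex sub-tournament and ignoring the rest — one must check $\Pi(n)$ contains $\Pi(7)$ as an induced sub-tournament, which follows since the parity rule restricted to $\{1,\dots,7\}$ is exactly $\Pi(7)$) contains enough directed triangles sharing a common "center" edge or center vertex to set up such a fork within Maker's first two or three moves. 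Concretely, I would identify a vertex $v$ in $\Pi(7)$ lying in many directed triangles, have Maker's first move be a well-chosen edge at $v$, and then do the case analysis on Breaker's reply, showing Maker can always create a double threat on her second or third move.

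The main obstacle, I expect, is the $n \ge 7$ direction — specifically, verifying that $\Pi(7)$ really does admit a Maker fork that is robust against every Breaker response. This requires understanding the directed-triangle hypergraph of $\Pi(7)$ well enough to locate a vertex or edge with the right local abundance of triangles, and then checking (a modest but genuine case tree) that after any single Breaker move Maker still has two independent triangle-completing threats available. The $n \le 6$ direction should be more mechanical: once the triangle list is written down, either a pairing exists or a two-move lookahead for Breaker suffices, and the finiteness makes it fully checkable. A secondary point to get right is the monotonicity claims in both directions — that a Breaker win on $\Pi(6)$ does not by itself preclude behaviour on smaller boards (so each of $n=3,4,5,6$ should be addressed, though smaller boards have even fewer triangles), and that a Maker win on the induced $\Pi(7)\subseteq \Pi(n)$ transfers upward (Maker simply ignores edges outside the chosen $7$ vertices, and any edge Breaker claims outside is irrelevant to Maker's threats).
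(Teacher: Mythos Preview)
Your overall architecture matches the paper exactly: Breaker wins for $n\le 6$ via pairing strategies (with $n=6$ requiring a more delicate, case-dependent pairing), and Maker wins for $n\ge 7$ by winning on the induced copy of $\Pi(7)$ and invoking monotonicity upward. The small-$n$ side and the monotonicity remarks are fine as planned.

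The gap is in your Maker strategy on $\Pi(7)$. The expectation that Maker can force a double threat on her \emph{second or third move} is wrong. In $\Pi(7)$ any two edges lie in at most one directed triangle ($\Delta_2=1$), so two Maker edges can never threaten two triangles; and once Maker's second move creates a single threat, Breaker's forced block leaves Maker's third move again producing only a single threat. If instead Maker declines to threaten on move two, Breaker has two free moves to pre-empt whichever ``fork edge'' Maker is aiming for. A short local case tree around a high-degree vertex does not close; one genuinely needs a longer forcing line. The paper's key idea, which your plan is missing, is to view the winning sets as a $3$-uniform hypergraph and show that \emph{after any Breaker first move} there survives a cycle of hyperedges through Maker's first edge $m_1=(1,2)$. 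Maker then ``hops'' along this cycle, claiming every second vertex: each hop creates a single immediate threat that pins Breaker's reply, and when the cycle closes (after five to seven Maker moves, depending on which cycle survives) the last hop creates two simultaneous threats. So your endgame (a double threat) is right, but the mechanism to reach it is a global structural observation about hypergraph cycles, not a two-move local fork.
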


This result identifies $n=7$ as the threshold; in particular, the tournament $\Pi(7)$ is the smallest instance for which Maker's winning strategy exists. We use a \emph{cycle hopping strategy}, which will be described in detail when we present the proof, to outline Maker's winning strategy for $n\ge7$. For $n \le 6$, Breaker can use some variants of a pairing strategy to ensure herself a win.

\begin{theo}\label{thm:bias-threshold}
Let $b^*(n)$ denote the threshold bias for the $(1:b)$ Maker–Breaker directed triangle game on $\Pi(n)$. Then, for sufficiently large $n$, 
\begin{equation} \label{eqn:bias threshold inequality}
    \sqrt{\left(\frac{1}{12} + o(1)\right)n} \ \le\  b^*(n) \ \le\  \sqrt{\left(\frac{8}{3} + o(1)\right)n}.
\end{equation}

\end{theo}

\subsection{Maker's win on random tournaments}\label{subsec:randomtournament}

Let $T(n,p)$ be the random tournament from Definition~\ref{defi:randomtournament}, with $p\in(0,1)$ fixed.
Our next theorem establishes that Maker wins the directed triangle game on $T(n,p)$ with high probability.

\begin{theo}\label{thm:random}
For any fixed $p\in (0,1)$, the Maker–Breaker directed triangle game played on the random tournament $T(n,p)$ is a Maker's win with probability tending to $1$ as $n\to\infty$.
\end{theo}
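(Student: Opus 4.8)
The plan is to reduce the probabilistic statement to the deterministic one already in hand, namely Theorem~\ref{thm:thresholdunbiased}, together with a structural observation: Maker's cycle hopping strategy really only needs to see a bounded-size sub-tournament that is ``rich enough'' in directed triangles, and once $T(n,p)$ contains such a configuration on some $7$ (or more generally $O(1)$) vertices, Maker restricts her attention to those vertices and wins by playing the $\Pi(7)$-type strategy there, treating every edge outside the chosen sub-board as irrelevant (if Breaker claims an outside edge, Maker makes an arbitrary ``wasted'' move, which costs nothing because Maker's strategy on the sub-board is a first-player win with room to spare). So the real content is: with probability tending to $1$, $T(n,p)$ contains a sub-tournament on a fixed number of vertices that is isomorphic (as a tournament) to one on which Maker has a winning strategy as the \emph{second-moves-behind} player is not needed --- we only need a first-player Maker win on a fixed board, which Theorem~\ref{thm:thresholdunbiased} supplies for $\Pi(7)$ and indeed for every $\Pi(n)$, $n\ge 7$.

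First I would isolate a finite tournament $T^\star$ (for instance $\Pi(7)$, or whichever explicit $7$-vertex tournament the cycle hopping proof uses) on which Maker, moving first, wins the unbiased directed triangle game; call $m=|V(T^\star)|$. Next I would show that $T(n,p)$ contains a copy of $T^\star$ with high probability. Partition $V_n$ into $\lfloor n/m\rfloor$ disjoint blocks of $m$ vertices each; for a fixed block, the induced sub-tournament is a uniformly random tournament on $m$ labelled vertices (each of the $\binom{m}{2}$ edges oriented independently with probability $p$ or $1-p$), hence it equals any particular tournament $T^\star$ with probability at least $\bigl(\min(p,1-p)\bigr)^{\binom{m}{2}}=:q>0$, a constant depending only on $p$ and $m$. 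The events ``block $i$ induces a copy of $T^\star$'' are independent across the $\lfloor n/m\rfloor$ disjoint blocks, so the probability that \emph{no} block works is at most $(1-q)^{\lfloor n/m\rfloor}\to 0$ as $n\to\infty$. On the complementary event, fix such a block $B$; Maker plays the $T^\star$-winning strategy on the sub-board $E(T[B])$ and ignores all other edges, so she claims a directed triangle inside $B$ before the relevant sub-board is exhausted, hence wins on $T(n,p)$.

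The one point that needs care --- and the step I expect to be the only genuine obstacle --- is checking that confining Maker to a bounded sub-board is legitimate in the Maker--Breaker framework: a priori Breaker gets ``free'' moves whenever she plays outside $B$, which in some games can help her. Here it does not, because Maker is the \emph{first} player and the sub-game on $E(T[B])$ with Maker moving first is a Maker's win; by the standard strategy-stealing / extra-move monotonicity principle for Maker--Breaker games (an additional Breaker move outside the winning-set hypergraph can only hurt Breaker, never help her, and Maker can always answer an ``irrelevant'' Breaker move with an arbitrary move of her own), Maker's winning strategy on $E(T[B])$ lifts verbatim to a winning strategy on $E(T(n,p))$: whenever Breaker plays inside $B$, Maker responds per the $T^\star$ strategy; whenever Breaker plays outside $B$, Maker plays any still-available edge inside $B$ that her $T^\star$-strategy has not yet used (such an edge exists until Maker has already won, since Maker wins $T^\star$ strictly before all of $E(T[B])$ is claimed), effectively handing herself a tempo. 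I would state this monotonicity lemma explicitly (or cite it from \cite{beck2008combinatorial} or \cite{Hefetz2014-mg}) and then the theorem follows by combining it with the block argument above.
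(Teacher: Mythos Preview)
Your proposal is correct and follows the same overall architecture as the paper: show that $T(n,p)$ contains, with high probability, an induced copy of a fixed tournament on which Maker wins as first player (the paper takes $T^\star=\Pi(7)$ explicitly), and then lift Maker's strategy to the big board via the sub-board monotonicity argument (the paper's Lemma~\ref{lem:extension} handles exactly this transfer).

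The one genuine difference is in how the existence of the copy is established. The paper defines $X=\sum_{|A|=7} I_A$ over all $7$-subsets, computes $\mathbb{E}[X]=\Theta(n^7)$ and $\mathbb{E}[X^2]/(\mathbb{E}[X])^2=1+o(1)$ by partitioning pairs $(A,B)$ according to $|A\cap B|$, and concludes via Paley--Zygmund. Your disjoint-block argument is strictly more elementary: partitioning $V_n$ into $\lfloor n/7\rfloor$ blocks and using independence gives $\mathbb{P}(\text{no copy})\le (1-q)^{\lfloor n/7\rfloor}$ directly, with no variance computation at all. What your approach buys is simplicity; what the paper's second-moment route buys is a template that would survive into the regime $p=p(n)\to 0$ (hinted at in the paper's closing remark on threshold probabilities), where the per-block success probability is no longer bounded away from zero and disjoint blocks alone would not suffice.

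One small wording point: your two descriptions of how Maker handles Breaker's moves outside $B$ do not quite agree (``an arbitrary wasted move'' in the first paragraph versus ``any still-available edge inside $B$ not yet used by the strategy'' in the third). The cleanest formulation, and the one the paper uses, is that Maker simply treats an outside Breaker move as a pass on the sub-board and plays her next $\Pi(7)$-strategy move; since extra tempo can only help Maker in a Maker--Breaker game, this is immediately sound. Your invocation of the monotonicity principle covers this either way, so the argument goes through.
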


The proof of Theorem \ref{thm:random} uses the second moment method for the random variable counting induced copies of $\Pi(7)$ in $T(n,p)$. Since Maker has a winning strategy on $\Pi(7)$ by Theorem \ref{thm:thresholdunbiased}, the existence of one induced copy of $\Pi(7)$ inside $T(n,p)$ is enough to give Maker a win on $T(n,p)$.

Generally, on a tournament $T(n)$, one can define a notion of how "balanced" it is based on the in-degrees and out-degrees of its vertices. Tournaments in which each vertex has equal (when $n$ is odd) or near equal (when $n$ is even) in-degree and out-degree for each vertex are called \emph{regular or near-regular tournaments}. Regular tournaments can be thought of as being the most balanced. At the other end of the spectrum, we have \emph{transitive tournaments}, which are tournaments where the vertices can be ordered such that every edge is directed from a vertex indexed "higher" in the order to a vertex indexed "lower" in the order. Transitive tournaments are the most imbalanced.

\begin{rem}
    The out-degree of a vertex is called its \emph{score}. The scores of vertices in $V_n$ play a crucial role in determining the number of directed triangles (or $3$- cycles) in the corresponding tournament. Particularly, as described in \cite{moon2015topics}, we know that a regular or near-regular tournament contains the maximum number of directed triangles for a tournament of that size. In contrast, a transitive tournament is acyclic (see Definition \ref{defi:transitive tournament}).

\end{rem}

\subsection{Flip-Bias Threshold}\label{subsec:flipbias}

To counteract Maker's advantage in the unbiased game on the parity tournament $\Pi(n)$, we allow Breaker to flip the orientations of at most $\kappa(n)$ edges before the game begins, and then the players start the usual unbiased directed triangle game on the resulting perturbed tournament.
We write $\kappa^*(n)$ for the smallest integer $k$ for which Breaker can choose a set $S$ of flips, with $|S|\le k$, that guarantees her a win on the perturbed board.

\begin{theo}\label{thm:flip-bias}
    The flip-bias threshold $\kappa^*(n)$ satisfies $\kappa^*(n) \le (n^2-1)/8$ if $n$ is odd, and $\kappa^*(n) \le n^2/8+n/4-1$ if $n$ is even.
    Furthermore, if $n$ is odd and $n\ge 11$ then $\kappa^*(n) \ge n(n-11)/12$,
    and if $n$ is even and $n\ge 14$ then $\kappa^*(n) \ge n(n-14)/12 + 1$.
\end{theo}

\begin{prop}\label{prop:linear-triangles-breakerwin}
    For every $n$, there exists a tournament on $n$ vertices with at most $\lfloor n/3\rfloor$ directed triangles on which Breaker has a winning strategy in the unbiased directed triangle game.
\end{prop}

Taken together, Theorems \ref{thm:thresholdunbiased}–\ref{thm:flip-bias} form the core of our results on the Maker–Breaker directed triangle game. The first couple of results establish the critical size and bias thresholds for the game on the parity tournament, the third shows Maker's win on random tournaments for any fixed $p\in(0,1)$, and the fourth introduces the flip-bias mechanism and its threshold, thereby quantifying the extra power Breaker must use to counteract Maker's advantage in the unbiased directed triangle game on the parity tournament. 

\section{Proofs of our results stated in Subsection \ref{subsec:paritytournament}}\label{sec: proofs1}

To facilitate the proofs for the theorems stated in Section \ref{section:Main_Results}, we first prove some results regarding the structure and count of the directed triangles in a parity tournament. 

\subsection{Structural characterization of directed triangles -- our winning sets}\label{subsec:structuralcharacterization}

Recall that the parity tournament $\Pi(n)$ is defined on the vertex set $V_n$. A triple $(a,b,c)$ is said to be a directed triangle in a tournament if $(a,b), (b,c),(c,a)$ are present in $E(T(n))$. Any cyclic rotation of this triple will represent the same underlying directed triangle, and thus we define its canonical representation to be the one in which the smallest element is written first; that is, if $a = \min\{a,b,c\}$, then we take $(a,b,c)$ as the unique canonical representative of the triangle.

Below, we shall establish necessary and sufficient conditions for such a canonical triple to correspond to a directed triangle in the parity tournament. Particularly, we will prove that these conditions force the three vertices to be arranged in ascending order and have alternating parity.

\begin{lem}\label{lem:triangle_structure}
Let $a,b,c \in V_n$ with $a = \min\{a,b,c\}$. The directed triangle $(a,b,c)$ with edges $(a,b),(b,c),(c,a)$ exists in the parity tournament $\Pi(n)$ (as defined in Definition \ref{defi:Parity rule}) if and only if $a+b \text{ is odd},\, b+c \text{ is odd},\, a+c \text{ is even}$ and $a < b < c$.

\end{lem}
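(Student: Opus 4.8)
The plan is to prove both directions of the iff by directly invoking the parity rule from Definition~\ref{defi:Parity rule}, which dictates the orientation of every edge in $\Pi(n)$ purely in terms of the parities of vertex labels and their relative order. The key observation that organizes everything is this: for a pair $\{x,y\}$ with $x<y$, the edge points ``upward'' (from $x$ to $y$) exactly when $x+y$ is odd, i.e.\ exactly when $x$ and $y$ have opposite parities, and it points ``downward'' (from $y$ to $x$) exactly when $x+y$ is even, i.e.\ when $x$ and $y$ share the same parity.

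For the ``if'' direction, I would assume $a<b<c$ with $a+b$ odd, $b+c$ odd, $a+c$ even, and simply verify that each of the three required edges $(a,b)$, $(b,c)$, $(c,a)$ is present. Since $a<b$ and $a+b$ is odd, the rule gives $(a,b)\in E(\Pi(n))$; since $b<c$ and $b+c$ is odd, the rule gives $(b,c)\in E(\Pi(n))$; since $a<c$ and $a+c$ is even, the rule gives $(c,a)\in E(\Pi(n))$. Hence $(a,b,c)$ is a directed triangle. (One may also note the parity conditions are internally consistent: if $a+b$ and $b+c$ are both odd then $a$ and $c$ have the same parity as each other, so $a+c$ is automatically even — I would remark on this so the three parity conditions are seen to reduce to ``$a,c$ same parity, $b$ opposite,'' equivalently alternating parity along $a<b<c$.)

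For the ``only if'' direction, assume $(a,b,c)$ with edges $(a,b),(b,c),(c,a)$ is a directed triangle in $\Pi(n)$, and that $a=\min\{a,b,c\}$. I first argue $a<b<c$: since $a$ is the minimum, $a<b$ and $a<c$, so the only thing to rule out is $b>c$. If $b>c$, then the edge $(b,c)$ goes from the larger label $b$ to the smaller label $c$; by the parity rule this forces $b+c$ even, and the edge $(a,b)$ with $a<b$ being present forces $a+b$ odd, and the edge $(c,a)$ — here $a<c$, and the edge points from $c$ down to $a$ — forces $a+c$ even. But $a+b$ odd and $a+c$ even give $b,c$ of opposite parity, contradicting $b+c$ even. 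Hence $b<c$, so $a<b<c$. Now with the order $a<b<c$ established, re-reading the three edges through the parity rule: $(a,b)$ present with $a<b$ forces $a+b$ odd; $(b,c)$ present with $b<c$ forces $b+c$ odd; $(c,a)$ present, pointing from $c$ down to $a$ with $a<c$, forces $a+c$ even. This yields exactly the claimed parity conditions.

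I do not anticipate a genuine obstacle here — the lemma is essentially a careful unpacking of the definition, and the only point requiring a little attention is the case analysis ruling out $b>c$ in the ``only if'' direction, where one must be careful to apply the parity rule with the vertices in the correct size order before reading off the parity constraint from each edge's orientation. I would present this cleanly by always writing a pair as (smaller, larger) and stating ``edge present from smaller to larger $\iff$ sum odd'' as a standing micro-lemma, then the rest is immediate.
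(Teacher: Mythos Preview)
Your proposal is correct and follows essentially the same approach as the paper: both directions are a direct reading of the parity rule, with the ``if'' part identical and the ``only if'' part differing only in organization. The paper first reads $a+b$ odd and $a+c$ even from the edges $(a,b)$ and $(c,a)$, deduces $b+c$ odd from these, and then uses the presence of $(b,c)$ to conclude $b<c$; you instead assume $b>c$ for contradiction, read $b+c$ even from the edge $(b,c)$, and collide it with $b+c$ odd obtained from the other two edges --- the same parity computation, just contrapositively arranged.
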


\begin{proof}
In order for $(a,b,c)$ to form a directed triangle in the parity tournament, we require the following (as dictated by the parity rule described in Definition \ref{defi:Parity rule}):
\begin{enumerate}
    \item Since the edge $(a,b)$ exists and we know that $a<b$, it must be that $a+b$ is odd.
    \item Similarly, the edge $(c,a)$ exists where $a<c$, and thus $a+c$ must be even.
    \item Now that we know $a + b$ is odd and $a + c$ is even, we can conclude that $a$ and $b$ have opposite parity, while $a$ and $c$ have the same parity. Consequently, $b$ and $c$ have opposite parity, which implies that $b+c$ is odd. Our directed triangle also requires the edge $(b,c)$ to exist, and thus, it must be that $b<c$.
\end{enumerate}
Conversely, let us assume that $a<b<c$ and that $a+b \text{ is odd},\, b+c \text{ is odd},\, a+c \text{ is even}.$
Then, the parity rule implies the following:
\begin{enumerate}
    \item Since $a+b$ is odd and $a<b$, the edge between $a$ and $b$ is oriented as $(a,b)$.
    \item Since $b+c$ is odd and $b<c$, the edge between $b$ and $c$ is oriented as $(b,c)$.
    \item Since $a+c$ is even and $a<c$, the edge between $a$ and $c$ is oriented as $(c,a)$.
\end{enumerate}
These three directed edges together form the directed triangle $(a,b,c).$ This completes the proof.
\end{proof}

\subsection{Counting the winning sets} \label{subsec: count of winning sets}

Having characterised the structure of directed triangles in $\Pi(n)$, it is only natural to count them next. Many criteria determining whether $\Pi(n)$ is a Maker's win or a Breaker's win rely heavily on the count of these winning sets.

\begin{lem}\label{lem:countwinning}
    In a parity tournament $\Pi(n)$, the number of directed triangles $w(n)$ is $\sum_{i=1}^{n-2} \left\lceil \frac{i}{2} \right\rceil \cdot \left\lceil \frac{n - (i+1)}{2} \right\rceil$.
\end{lem}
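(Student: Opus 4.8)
The plan is to use the structural characterization from Lemma~\ref{lem:triangle_structure}: a directed triangle in $\Pi(n)$ corresponds (uniquely, via its canonical representative) to a triple $a<b<c$ with $a,c$ of the same parity and $b$ of the opposite parity. So counting directed triangles reduces to counting such triples. First I would fix the "middle" vertex $b$ and count the number of valid pairs $(a,c)$ with $a<b<c$, where $a$ has parity opposite to $b$ and $c$ has parity opposite to $b$ — equivalently, $a$ and $c$ both have parity different from $b$, which automatically forces $a+c$ even and $a+b$, $b+c$ odd. The number of choices for $a$ in $\{1,\dots,b-1\}$ with parity opposite to $b$, and independently the number of choices for $c$ in $\{b+1,\dots,n\}$ with parity opposite to $b$, multiply to give the count for that fixed $b$.

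Next I would carry out the elementary counting. If $b$ is even, the vertices below $b$ with opposite (odd) parity are $1,3,\dots,b-1$, of which there are $b/2 = \lceil (b-1)/2\rceil$; if $b$ is odd, the odd... wait, the opposite-parity (even) vertices below $b$ are $2,4,\dots,b-1$, numbering $(b-1)/2 = \lceil (b-1)/2 \rceil$. In both cases the count is $\lceil (b-1)/2\rceil$. Similarly, the vertices above $b$ in $\{b+1,\dots,n\}$ of parity opposite to $b$ number $\lceil (n-b)/2 \rceil$ — again one checks both parities of $b$ give the same ceiling expression. Hence the contribution of middle vertex $b$ is $\lceil (b-1)/2 \rceil \cdot \lceil (n-b)/2 \rceil$. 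Substituting $i = b-1$ (so $i$ ranges over $1,\dots,n-2$ as $b$ ranges over $2,\dots,n-1$; the extreme values $b=1$ and $b=n$ contribute zero since a middle vertex needs something on both sides), this becomes $\lceil i/2\rceil \cdot \lceil (n-(i+1))/2 \rceil$, and summing over $i$ yields
\[
w(n) = \sum_{i=1}^{n-2} \left\lceil \frac{i}{2} \right\rceil \cdot \left\lceil \frac{n-(i+1)}{2} \right\rceil,
\]
as claimed.

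The only genuinely fiddly step is verifying that $\lceil (b-1)/2\rceil$ and $\lceil (n-b)/2\rceil$ are the correct counts regardless of the parity of $b$ (and of $n$), which is a short case check on floors and ceilings; I would present this compactly by noting that among any $m$ consecutive integers exactly $\lceil m/2 \rceil$ have a prescribed parity when the block starts with that parity, and arranging the indexing so this always holds. I do not anticipate any real obstacle — the main point is simply to translate Lemma~\ref{lem:triangle_structure} into the middle-vertex decomposition and then re-index the sum. One should also remark that no overcounting occurs because each directed triangle has a unique canonical representative $(a,b,c)$ with $a = \min\{a,b,c\}$, and in that representative the middle coordinate $b$ is uniquely determined, so distinct triples $(a,b,c)$ give distinct triangles and vice versa.
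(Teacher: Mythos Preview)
Your proposal is correct and follows essentially the same approach as the paper: fix the middle vertex $b$, count the valid choices of $a\in\{1,\dots,b-1\}$ and $c\in\{b+1,\dots,n\}$ with parity opposite to $b$ as $\lceil(b-1)/2\rceil$ and $\lceil(n-b)/2\rceil$ respectively, multiply, sum over $b$, and re-index via $i=b-1$. Your additional remarks on verifying the ceiling counts by a parity case check and on the uniqueness of the canonical representative are sound and slightly more explicit than the paper's version.
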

\begin{proof}
    Fix $b \in \{2, \dots, n-1\}$. In order to come up with triples $(a,b,c)$ that satisfy the conditions stated in Lemma \ref{lem:triangle_structure}, we must choose
    \begin{itemize}
        \item $a$ from $\{1, \dots, b-1\}$ with parity opposite to $b$, and
        \item $c$ from $\{b+1, \dots, n\}$ with parity opposite to $b$ as well, so that $a$ and $c$ match each other’s parity.
    \end{itemize}
    
    The number of integers in $\{1, \dots, b-1\}$ with parity different from $b$ is $\left\lceil \frac{b-1}{2} \right\rceil.$
    Similarly, the number of integers in $\{b+1, \dots, n\}$ with parity different from \( b \) is $\left\lceil \frac{n-b}{2} \right\rceil.$
    Finally, summing over $b$ we get
    \begin{align}
    w(n) = \sum_{b=2}^{n-1} \left\lceil \frac{b-1}{2} \right\rceil \cdot \left\lceil \frac{n-b}{2} \right\rceil
          = \sum_{i=1}^{n-2} \left\lceil \frac{i}{2} \right\rceil \cdot \left\lceil \frac{n-(i+1)}{2} \right\rceil.\nonumber
    \end{align}
    
    If $n=2m+1$ is odd, write $i=2k$ and $i=2k+1$. Then
    \begin{align*}
        w(n)
        &=\sum_{k=1}^{m-1}\left\lceil \frac{2k}{2}\right\rceil \left\lceil \frac{2m+1-(2k+1)}{2}\right\rceil
        +\sum_{k=0}^{m-1}\left\lceil \frac{2k+1}{2}\right\rceil \left\lceil \frac{2m+1-(2k+2)}{2}\right\rceil\\
        &=\sum_{k=1}^{m-1} k(m-k)+\sum_{k=0}^{m-1}(k+1)(m-k)
        =\frac{n^3-n}{24}.
    \end{align*}
    
    If $n=2m$ is even, the same parity split gives
    \begin{align*}
        w(n)
        &=\sum_{k=1}^{m-1}\left\lceil \frac{2k}{2}\right\rceil \left\lceil \frac{2m-(2k+1)}{2}\right\rceil
        +\sum_{k=0}^{m-2}\left\lceil \frac{2k+1}{2}\right\rceil \left\lceil \frac{2m-(2k+2)}{2}\right\rceil\\
        &=\sum_{k=1}^{m-1} k(m-k)+\sum_{k=0}^{m-2}(k+1)(m-k-1)
        =\frac{n^3-4n}{24}.
        \end{align*}
\end{proof}

As expected, this count also aligns with a classical result from \cite{moon2015topics} concerning $3$-cycles in arbitrary tournaments.
\begin{prop} [Theorem $4$ from \cite{moon2015topics}] \label{theo: moon's count}
    Let $c_3(T(n))$ denote the number of $3$-cycles in the tournament $T(n)$.
    If $(s_1,s_2,\ldots,s_n)$ is the score vector of $T(n)$, where $s_i$ is the number of outgoing edges from vertex $i$, then
    \[
    c_3(T(n))
    = \binom{n}{3}
      - \sum_{i=1}^{n} \binom{s_i}{2}
    \ \le\ 
    \begin{cases}
    \dfrac{1}{24}\bigl(n^{3}-n\bigr), & \text{if $n$ is odd},\\[6pt]
    \dfrac{1}{24}\bigl(n^{3}-4n\bigr), & \text{if $n$ is even}.
    \end{cases}
    \]
    Equality holds if and only if $T(n)$ is regular when $n$ is odd, and near-regular when $n$ is even.
\end{prop}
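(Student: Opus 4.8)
The plan is to establish the exact identity first and then reduce the upper bound to a convexity argument. For the identity, I would partition the $\binom{n}{3}$ vertex-triples of $T(n)$ into those spanning a directed $3$-cycle and those spanning a \emph{transitive triple} (a triple in which a single vertex beats the other two, the remaining orientation being forced). Hence $c_3(T(n))$ plus the number of transitive triples equals $\binom{n}{3}$. In a transitive triple there is a unique ``dominating'' vertex, namely the one with out-degree $2$ inside the triple; and for a fixed vertex $i$, the transitive triples in which $i$ plays this role are precisely the unordered pairs of out-neighbours of $i$, of which there are $\binom{s_i}{2}$. Summing over $i$ counts each transitive triple exactly once, so the number of transitive triples is $\sum_{i=1}^{n}\binom{s_i}{2}$, and therefore $c_3(T(n)) = \binom{n}{3} - \sum_{i=1}^{n}\binom{s_i}{2}$.

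For the upper bound, I would use the global constraint $\sum_{i=1}^{n} s_i = \binom{n}{2}$ (every edge contributes $1$ to the out-degree of its tail). Since $\sum_i \binom{s_i}{2} = \tfrac12\bigl(\sum_i s_i^2 - \sum_i s_i\bigr)$ and $\sum_i s_i$ is fixed, maximizing $c_3(T(n))$ is equivalent to minimizing $\sum_i s_i^2$ over nonnegative integer vectors $(s_1,\dots,s_n)$ with sum $\binom{n}{2}$. A standard smoothing argument gives the minimizer: if $s_i \ge s_j + 2$ for some $i,j$, then replacing the pair $(s_i,s_j)$ by $(s_i-1, s_j+1)$ preserves the sum and strictly decreases $\sum_i s_i^2$, so the minimum is attained exactly at the most balanced vector, namely $s_i = \tfrac{n-1}{2}$ for all $i$ when $n$ is odd, and $s_i \in \{\tfrac n2 - 1, \tfrac n2\}$ each occurring $\tfrac n2$ times when $n$ is even. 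I would also remark that these balanced vectors are genuinely realizable — e.g.\ by the rotational (circulant) tournament for odd $n$ and a near-regular modification for even $n$ — so the bound is tight and is attained precisely by the regular (resp.\ near-regular) tournament.

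Substituting the minimizing score vector and simplifying then produces the two stated expressions. For $n$ odd, $\sum_i \binom{s_i}{2} = n\binom{(n-1)/2}{2} = \tfrac{n(n-1)(n-3)}{8}$, whence $c_3(T(n)) \le \binom{n}{3} - \tfrac{n(n-1)(n-3)}{8} = \tfrac{n(n-1)(n+1)}{24} = \tfrac{1}{24}(n^3-n)$. For $n$ even, $\sum_i \binom{s_i}{2} = \tfrac n2 \binom{n/2}{2} + \tfrac n2 \binom{n/2-1}{2} = \tfrac{n(n-2)^2}{8}$, whence $c_3(T(n)) \le \binom{n}{3} - \tfrac{n(n-2)^2}{8} = \tfrac{n(n-2)(n+2)}{24} = \tfrac{1}{24}(n^3-4n)$.

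There is no genuinely hard step here; the only point demanding care is the discrete optimization, where one must justify both that the sum of squares of a fixed-sum integer vector is minimized \emph{only} by the most balanced vector (up to permutation of coordinates) — handled by the exchange argument above — and that this vector is an admissible tournament score sequence, which can be settled either by exhibiting the rotational/near-regular tournament explicitly or by appealing to Landau's characterization of score sequences.
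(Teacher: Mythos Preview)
Your argument is correct and is in fact the standard proof of this classical result. Note, however, that the paper does not actually prove this proposition: it is quoted verbatim as Theorem~4 from Moon's monograph \cite{moon2015topics} and used as a black box. So there is no ``paper's own proof'' to compare against; your write-up simply supplies the proof that the paper chose to omit by citation, and it matches the argument one finds in Moon.
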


Accordingly, $\Pi(n)$ attains this upper bound: it is regular when $n$ is odd and near-regular when $n$ is even. Hence the value of $w(n)$ obtained in Lemma \ref{lem:countwinning} agrees exactly with Proposition \ref{theo: moon's count}.

\subsection{\texorpdfstring{Breaker's winning strategy on $\Pi(n)$ for $n < 7$}{Breaker's winning strategy on Pi(n) for n < 7}}

To begin with, we address the Maker-Breaker directed triangle game played on $\Pi(n)$ for $n=3,4,5$. Since $\Pi(3)$, $\Pi(4)$ and $\Pi(5)$ occur as induced sub-tournaments of $\Pi(6)$, it suffices to prove Breaker's win on $\Pi(6)$ (Lemma~\ref{lem: Breaker wins on Pi(6)}).

\begin{lem} \label{lem: breaker monotonicity}
If Breaker has a winning strategy in the Maker-Breaker directed triangle game on $\Pi(6)$, then she also has a winning strategy for the game played on $\Pi(n)$, where $n=3,4,5$.
\end{lem}
\begin{proof}
For each $m\in\{3,4,5\}$, the sub-tournament of $\Pi(6)$ induced by the vertex set $\{1,2,\dots,m\}$ is exactly $\Pi(m)$, since the parity rule depends only on the order relation and the parities of the labels. In particular, every directed triangle in $\Pi(m)$ is also a directed triangle in this induced copy of $\Pi(m)$ inside $\Pi(6)$.

Assume for contradiction that Maker has a winning strategy on $\Pi(m)$ for some $m\in\{3,4,5\}$. Then, when the game is played on $\Pi(6)$, Maker can simply restrict all her moves to the induced copy of $\Pi(m)$ and follow that winning strategy there. Any edge deleted by Breaker outside this induced copy has no effect on the play inside it, and any edge deleted by Breaker inside it is already accounted for by Maker's winning strategy on $\Pi(m)$. Hence Maker would also win on $\Pi(6)$, contradicting the assumption that Breaker wins on $\Pi(6)$.

Therefore, if Breaker wins on $\Pi(6)$, then she also wins on $\Pi(m)$ for each $m\in\{3,4,5\}$.

\end{proof}

We will now prove that Breaker in fact has a winning strategy for the game played on $\Pi(6)$. Before we do so, we first introduce a few tools that we will need for the proof.

\begin{defi}[Pairing strategy]\label{defi:pairingstrategy}
    A pairing strategy for Breaker begins with finding a collection of pairwise disjoint pairs of edges of the board such that each directed triangle contains, fully, one of these pairs. We refer to the two edges, $e_{1}$ and $e_{2}$, in any given pair $\{e_1,e_2\}$, as \emph{siblings}, and such a pair is called a \emph{blocking pair}. The strategy prescribes Breaker to, in each of her moves, respond by deleting the sibling of the edge claimed by Maker in the previous turn. In case Maker's previously claimed edge was not a part of any pair, Breaker arbitrarily deletes any edge in the tournament. 
\end{defi}
Because Breaker always deletes the sibling of whichever edge Maker claims, Maker can never claim both edges of any pair. Recall, from Definition \ref{defi:pairingstrategy}, that each of our winning sets (i.e.\ each directed triangle present in our tournament) contains precisely one of these pairs of edges, and one of the siblings belonging to each such pair is always deleted by Breaker. Consequently, Maker cannot claim any of the winning sets entirely, ensuring a win for Breaker. 

We refer to the phenomenon where Breaker prevents Maker from claiming a directed triangle, say $(a,b,c)$, by deleting one of the siblings belonging to the pair, say $\{e_{1},e_{2}\}$, that is contained in $(a,b,c)$, as "$\{e_{1},e_{2}\}$ \emph{blocking} the directed triangle $(a,b,c)$". 

As demonstrated in Figure \ref{fig: edgepairsontournaments} (here, the paired directed edges are indicated by connecting them using dashed curves), such a pairing strategy can be very useful for determining the outcome of certain games. Another way to represent such pairings, and, more generally, the winning sets in a game, is via a suitable hypergraph. This approach affords us a much easier visualization of the board as well as the progression of the game. For instance, the winning sets arising out of $\Pi(5)$, as well as a possible pairing illustrated in Figure \ref{fig: edgepairsontournaments}, are more readily seen via the hypergraph representation shown in Figure \ref{fig: edgepairsonhypergraph}. Particularly, each hyperedge in this hypergraph representation corresponds to the set of 3 directed edges that form a directed triangle in the underlying tournament, and any element in such a hyperedge corresponds to a directed edge from the underlying tournament.

\begin{defi}[Game hypergraph and cut operation]\label{defi:gamehypergraph}
Given a tournament $T(n)$, we write $\mathcal H(T(n))=(X,\mathcal F)$ for the $3$-uniform hypergraph whose vertex set is $X=E(T(n))$ and whose hyperedges are precisely the winning sets of the directed triangle game on $T(n)$, that is, the triples of directed edges that form directed triangles in $T(n)$. For any edge $e\in X$, we define
\[
\operatorname{Cut}(\mathcal H(T(n)),e)
:=
\bigl(X\setminus\{e\},\{F\in\mathcal F:e\notin F\}\bigr),
\]
which is the game hypergraph obtained after deleting the edge $e$ from the board.

\end{defi}

\begin{figure}[htbp]
    \centering
    \includegraphics[width=0.5\textwidth]{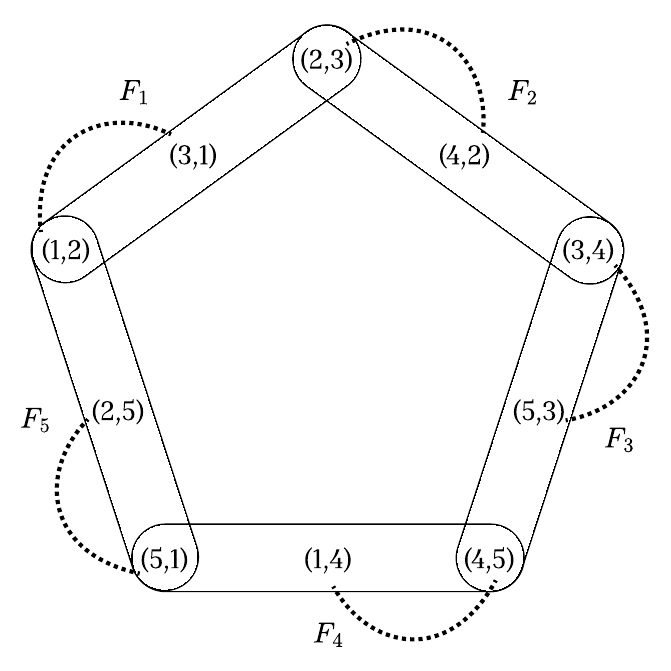}
    \caption{Pairing on $\Pi(5)$'s hypergraph representation}
    \label{fig: edgepairsonhypergraph}
\end{figure}

\begin{figure}[htbp]
\centering
\begin{tikzpicture}[>=stealth, thick, scale=2.25]
    \node[circle, draw, minimum size=8pt] (v1) at (90:1.5)  {1};
    \node[circle, draw, minimum size=8pt] (v2) at (162:1.5) {2};
    \node[circle, draw, minimum size=8pt] (v3) at (234:1.5) {3};
    \node[circle, draw, minimum size=8pt] (v4) at (306:1.5) {4};
    \node[circle, draw, minimum size=8pt] (v5) at (18:1.5)  {5};
    \path[->]
      (v1) edge
        node[midway, above, sloped] {$a_1$}
      (v2);
    \path[->]
      (v3) edge
        node[midway, above, sloped] {$a_2$}
      (v1);
    \path[->]
      (v2) edge
        node[midway, above, sloped] {$a_3$}
      (v3);
    \path[->]
      (v4) edge
        node[midway, below, sloped] {$a_4$}
      (v2);
    \path[->]
      (v3) edge
        node[midway, below, sloped] {$a_5$}
      (v4);
    \path[->]
      (v5) edge
        node[midway, above, sloped] {$a_6$}
      (v3);
    \path[->]
      (v4) edge
        node[midway, below, sloped] {$a_7$}
      (v5);
    \path[->]
      (v1) edge
        node[midway, above, sloped] {$a_8$}
      (v4);
    \path[->]
      (v5) edge
        node[midway, right, sloped] {$a_9$}
      (v1);
    \path[->]
      (v2) edge
        node[midway, above, sloped] {$a_{10}$}
      (v5);
    \path (v1) -- node[midway, draw=none, inner sep=0pt] (A1) {} (v2);
    \path (v3) -- node[midway, draw=none, inner sep=0pt] (A2) {} (v1);
    \path (v2) -- node[midway, draw=none, inner sep=0pt] (A3) {} (v3);
    \path (v4) -- node[midway, draw=none, inner sep=0pt] (A4) {} (v2);
    \path (v3) -- node[midway, draw=none, inner sep=0pt] (A5) {} (v4);
    \path (v5) -- node[midway, draw=none, inner sep=0pt] (A6) {} (v3);
    \path (v4) -- node[midway, draw=none, inner sep=0pt] (A7) {} (v5);
    \path (v1) -- node[midway, draw=none, inner sep=0pt] (A8) {} (v4);
    \path (v5) -- node[midway, draw=none, inner sep=0pt] (A9) {} (v1);
    \path (v2) -- node[midway, draw=none, inner sep=0pt] (A10) {} (v5);
    \draw[dashed, bend right=20] (A1) to (A2);
    \draw[dashed, bend left=15] (A3) to (A4);
    \draw[dashed, bend right=25] (A5) to (A6);
    \draw[dashed, bend left=20] (A7) to (A8);
    \draw[dashed, bend right=15] (A9) to (A10);
\end{tikzpicture}
\caption{Diagram of $\Pi(5)$ with edges labeled $a_1,\dots,a_{10}$ and dashed arcs showing Breaker's pairing strategy.}
\label{fig: edgepairsontournaments}
\end{figure}

\subsection{\texorpdfstring{Breaker's winning strategy on $\Pi(6)$}{Breaker's winning strategy on Pi(6)}}

\begin{lem}\label{lem: Breaker wins on Pi(6)}
Breaker wins the Maker-Breaker directed triangle game on $\Pi(6)$.
\end{lem}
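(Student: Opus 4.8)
The plan is to begin by making the winning sets explicit. By Lemma~\ref{lem:triangle_structure}, the directed triangles of $\Pi(6)$ are exactly
\[
T_1=(1,2,3),\quad T_2=(1,2,5),\quad T_3=(1,4,5),\quad T_4=(3,4,5),\quad T_5=(2,3,4),\quad T_6=(2,3,6),\quad T_7=(2,5,6),\quad T_8=(4,5,6);
\]
in particular $w(6)=8$. The first thing I would record is that, unlike for $\Pi(3),\Pi(4),\Pi(5)$, a \emph{pure} pairing strategy cannot work here: two distinct directed triangles in a tournament share at most one edge (two edges of a $3$-cycle already determine it), so each blocking pair blocks at most one of $T_1,\dots,T_8$, and a pairing strategy would need eight pairwise disjoint pairs, i.e.\ $16$ edges, whereas $E(\Pi(6))$ has only $15$. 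So Breaker needs a \emph{variant} of the pairing idea.

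The structural observation that drives the construction is that $(2,3),(4,5),(2,5)$ form a transversal of the eight winning sets, partitioning them into three fibres --- the triangles through $(2,3)$ are $\{T_1,T_5,T_6\}$, those through $(4,5)$ are $\{T_3,T_4,T_8\}$, and those through $(2,5)$ are just $\{T_2,T_7\}$ --- and that within each fibre the triangles pairwise meet exactly in that transversal edge. The two-element fibre $\{T_2,T_7\}$ is the bottleneck, but because its members share the single edge $(2,5)$, deleting $(2,5)$ kills both at once. For the remaining six triangles one checks that the blocking pairs
\[
P_1=\{(1,2),(3,1)\},\quad P_3=\{(1,4),(5,1)\},\quad P_4=\{(3,4),(5,3)\},\quad P_5=\{(2,3),(4,2)\},\quad P_6=\{(3,6),(6,2)\},\quad P_8=\{(5,6),(6,4)\}
\]
satisfy $P_i\subset T_i$, are pairwise disjoint, and avoid $(2,5)$; they leave only $(4,5)$ and the triangle-free edge $(1,6)$ unpaired.

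Breaker's strategy is then the pairing strategy for $P_1,P_3,P_4,P_5,P_6,P_8$, modified as follows: she spends one move deleting $(2,5)$ (on her first turn, unless Maker opened with $(2,5)$), she always gives priority to an immediate threat (if an as-yet-unblocked triangle acquires two Maker edges together with a live third edge, she deletes that third edge), and she uses any otherwise-idle turn to delete the missing sibling of the at most one pair that may still be ``pending''. If Maker's opening move is $(2,5)$ itself, Breaker instead deals with $\{T_2,T_7\}$ directly and reactively --- for instance, deleting $(1,2)$, which also kills $T_1$, and $(6,2)$, which also kills $T_6$ --- and then runs the pairing for the four survivors $T_3,T_4,T_5,T_8$; since those two deletions already dispose of four triangles, the move budget is very comfortable here.

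The real work, and the step I expect to be the main obstacle, is checking that this never lets Maker finish a triangle. Because Breaker answers every Maker move at once apart from her single detour to delete $(2,5)$, at most one blocking pair is ever pending, so a fatal ``double threat'' --- two unblocked triangles each one move from completion and sharing the edge Maker is about to take --- could only come from that pending pair together with a second triangle ``loaded'' through a leak, namely via the unpaired edge $(4,5)$ (which lies in $T_3,T_4,T_8$), via $(3,4)\in P_4$ (which also lies in $T_5$), or via $(2,3)\in P_5$ (which also lies in $T_1,T_6$). One then verifies, leak by leak, that no such combination actually forms a double threat --- the partner-deletion for $(3,4)$ already blocks $T_4$, the partner-deletions for $P_5$ already block $T_5$, and $T_1$ and $T_6$ pairwise share no second edge with each other or with the pending pair's triangle --- and separately dispatches the opening-$(2,5)$ case. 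Throughout, the automorphism $i\mapsto 7-i$ of $\Pi(6)$ (which swaps $T_1\leftrightarrow T_8$, $T_2\leftrightarrow T_7$, $T_3\leftrightarrow T_6$, $T_4\leftrightarrow T_5$) roughly halves the number of Maker openings that must be inspected, and the hypergraph picture used for $\Pi(5)$ keeps the bookkeeping tractable.
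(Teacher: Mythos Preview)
Your overall plan---delete $(2,5)$ first to kill the two triangles $T_2,T_7$, then block the remaining six by a pairing---is exactly the paper's approach (indeed your pairing is the image of the paper's under the automorphism $i\mapsto 7-i$). The gap is in how you handle the ``debt'' left by Maker's first move. With your six pairs and your reactive rule (threat first, otherwise play the sibling of Maker's last move, otherwise clear the pending pair), Maker wins on the line
\[
m_1=(1,2),\qquad m_2=(3,4),\qquad m_3=(2,3).
\]
After $b_1=(2,5)$ there is no threat, so on $b_2$ your rule says to play the sibling of $m_2$ in $P_4$, namely $(5,3)$. Then $m_3=(2,3)$ produces a genuine double threat: $T_1=\{(1,2),(2,3),(3,1)\}$ is missing only $(3,1)$, and $T_5=\{(2,3),(3,4),(4,2)\}$ is missing only $(4,2)$, both still free. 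Breaker can block only one, and Maker completes the other on $m_4$. (Switching the priority to ``clear pending before sibling'' does not help: the mirror line $m_1=(3,4),\,m_2=(1,2),\,m_3=(2,3)$ then fails in the same way.)

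Your leak-by-leak sketch misses this because the justifications are aimed at the wrong triangles: ``the partner-deletion for $(3,4)$ already blocks $T_4$'' is true but irrelevant, since the leak from $(3,4)$ is into $T_5$; and ``the partner-deletions for $P_5$ already block $T_5$'' does not apply when Maker's very first $P_5$-edge is $m_3=(2,3)$ itself, the move that creates the double threat. The paper sidesteps this by \emph{pre-adjusting} the pairing once $m_1$ is seen (the ``switch cascade''): for $m_1=(1,2)$ one re-pairs $T_1$ as $\{(3,1),(2,3)\}$ before any further play, after which a pure pairing strategy suffices. Your reactive scheme would need a comparably careful rule---or a different choice of the six pairs---to survive the line above.
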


\begin{figure}
\centering
\begin{tikzpicture}[->,thick,scale=1.5, every node/.style={circle, draw, minimum size=10pt}]
    \node (1) at (0,2) {1};
    \node (2) at (1.73,1) {2};
    \node (3) at (1.73,-1) {3};
    \node (4) at (0,-2) {4};
    \node (5) at (-1.73,-1) {5};
    \node (6) at (-1.73,1) {6};
    \draw (1) edge (2);
    \draw (3) edge (1);
    \draw (1) edge (4);
    \draw (5) edge (1);
    \draw (1) edge (6);
    \draw (2) edge (3);
    \draw (4) edge (2);
    \draw (2) edge (5);
    \draw (6) edge (2);
    \draw (3) edge (4);
    \draw (5) edge (3);
    \draw (3) edge (6);
    \draw (4) edge (5);
    \draw (6) edge (4);
    \draw (5) edge (6);
\end{tikzpicture}
\caption{$\Pi(6)$}
\label{fig:Tp6}
\end{figure}

\begin{figure}[htbp]
\centering
  \includegraphics[width=0.5\textwidth]{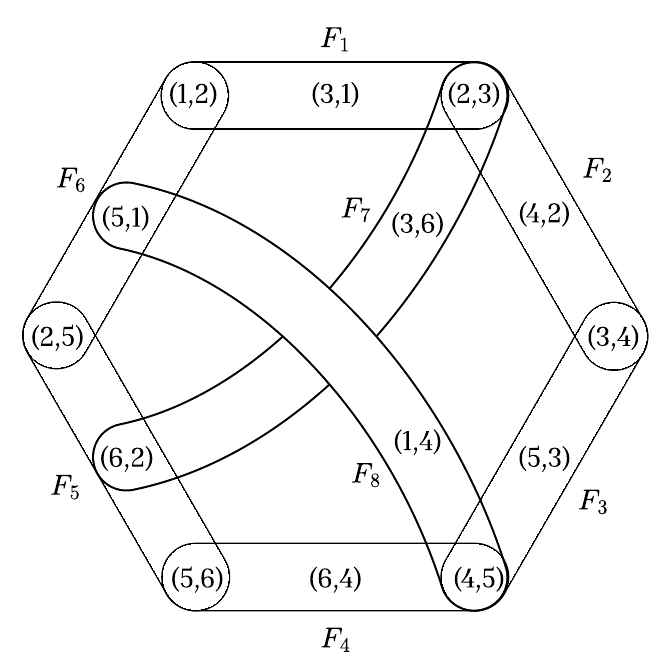}
  \caption{$\Pi(6)$'s hypergraph representation}
  \label{fig:tp6hyper}
\end{figure}

\begin{proof}
In the parity tournament $\Pi(6)$, there are $8$ winning sets (i.e.\ directed triangles), listed as follows:
\[
\begin{array}{rcl}
F_1 &=& \{(1,2),\,(3,1),\,(2,3)\},\\[1mm]
F_2 &=& \{(2,3),\,(4,2),\,(3,4)\},\\[1mm]
F_3 &=& \{(3,4),\,(5,3),\,(4,5)\},\\[1mm]
F_4 &=& \{(4,5),\,(6,4),\,(5,6)\},\\[1mm]
F_5 &=& \{(5,6),\,(6,2),\,(2,5)\},\\[1mm]
F_6 &=& \{(2,5),\,(5,1),\,(1,2)\},\\[1mm]
F_7 &=& \{(2,3),\,(3,6),\,(6,2)\},\\[1mm]
F_8 &=& \{(5,1),\,(1,4),\,(4,5)\}.
\end{array}
\]

We split into two cases according to Maker's first move.

\medskip
\noindent\textbf{Case 1: $m_1\neq (2,5)$.}
Breaker plays $b_1=(2,5)$. Then $F_5$ and $F_6$ are removed, and the surviving winning sets are
\[
\mathcal F'=\{F_1,F_2,F_3,F_4,F_7,F_8\}.
\]
We begin with the two pairwise disjoint blocking families
\[
\mathcal P_L=
\Bigl\{
\{(1,2),(3,1)\},
\{(3,4),(4,2)\},
\{(4,5),(5,3)\},
\{(5,6),(6,4)\},
\{(3,6),(6,2)\},
\{(1,4),(5,1)\}
\Bigr\}
\]
and
\[
\mathcal P_R=
\Bigl\{
\{(1,2),(3,1)\},
\{(2,3),(4,2)\},
\{(3,4),(5,3)\},
\{(5,6),(6,4)\},
\{(3,6),(6,2)\},
\{(1,4),(5,1)\}
\Bigr\}.
\]
Each family is pairwise disjoint and each surviving winning set in $\mathcal F'$ contains exactly one displayed pair.

\smallskip
\noindent\textbf{Subcase 1A: $m_1\in\{(1,6),(2,3),(1,2),(3,1),(4,2),(3,4),(3,6),(6,2)\}$.}
Breaker uses $\mathcal P_L$ when $m_1\in\{(1,6),(2,3)\}$. For the remaining possibilities, Breaker modifies exactly one pair of $\mathcal P_L$ as follows:
\[
\begin{array}{c|c}
m_1 & \text{replace in }\mathcal P_L \\ \hline
(1,2) & \{(1,2),(3,1)\}\text{ by }\{(3,1),(2,3)\}\\[1mm]
(3,1) & \{(1,2),(3,1)\}\text{ by }\{(1,2),(2,3)\}\\[1mm]
(4,2) & \{(3,4),(4,2)\}\text{ by }\{(2,3),(3,4)\}\\[1mm]
(3,4) & \{(3,4),(4,2)\}\text{ by }\{(2,3),(4,2)\}\\[1mm]
(3,6) & \{(3,6),(6,2)\}\text{ by }\{(2,3),(6,2)\}\\[1mm]
(6,2) & \{(3,6),(6,2)\}\text{ by }\{(2,3),(3,6)\}
\end{array}
\]
In every row, the resulting family is pairwise disjoint, avoids Maker's already-claimed edge $m_1$, and blocks all winning sets in $\mathcal F'$.

\smallskip
\noindent\textbf{Subcase 1B: $m_1\in\{(1,4),(5,1),(5,3),(4,5),(6,4),(5,6)\}$.}
Breaker uses $\mathcal P_R$ when $m_1=(4,5)$. For the remaining possibilities, Breaker modifies exactly one pair of $\mathcal P_R$ as follows:
\[
\begin{array}{c|c}
m_1 & \text{replace in }\mathcal P_R \\ \hline
(1,4) & \{(1,4),(5,1)\}\text{ by }\{(4,5),(5,1)\}\\[1mm]
(5,1) & \{(1,4),(5,1)\}\text{ by }\{(1,4),(4,5)\}\\[1mm]
(5,3) & \{(3,4),(5,3)\}\text{ by }\{(3,4),(4,5)\}\\[1mm]
(6,4) & \{(5,6),(6,4)\}\text{ by }\{(4,5),(5,6)\}\\[1mm]
(5,6) & \{(5,6),(6,4)\}\text{ by }\{(4,5),(6,4)\}
\end{array}
\]
Again, in every row, the resulting family is pairwise disjoint, avoids Maker's already-claimed edge $m_1$, and blocks all winning sets in $\mathcal F'$.

Therefore, in Case~1 Breaker can follow the pairing strategy from Definition \ref{defi:pairingstrategy} from her second move onward, and Maker can never claim all three edges of a surviving winning set.

\medskip
\noindent\textbf{Case 2: $m_1=(2,5)$.}
Breaker plays $b_1=(6,2)$.

\smallskip
\noindent\textbf{Subcase 2A: $m_2=(5,1)$.}
Breaker must play $b_2=(1,2)$, since otherwise Maker wins immediately on
$F_6=\{(2,5),(5,1),(1,2)\}$.
After these four moves, the surviving winning sets are
\[
\mathcal F''=\{F_2,F_3,F_4,F_8\}.
\]
Breaker now uses the pairwise disjoint blocking family
\[
\Bigl\{
\{(2,3),(4,2)\},
\{(3,4),(5,3)\},
\{(5,6),(6,4)\},
\{(1,4),(4,5)\}
\Bigr\},
\]
which blocks $F_2,F_3,F_4,F_8$ respectively. Hence Breaker wins by the pairing strategy.

\smallskip
\noindent\textbf{Subcase 2B: $m_2\neq(5,1)$.}
Breaker plays $b_2=(5,1)$.
Then the surviving winning sets are
\[
\mathcal F'''=\{F_1,F_2,F_3,F_4\}.
\]
For the possible values of $m_2$, Breaker uses the following pairwise disjoint blocking families:
\[
\begin{array}{c|l}
m_2 & \text{Blocking family} \\ \hline
(2,3) & \bigl\{\{(1,2),(3,1)\},\{(3,4),(4,2)\},\{(4,5),(5,3)\},\{(5,6),(6,4)\}\bigr\}\\[1mm]
(3,1),(1,4),(1,6),(3,6) & \bigl\{\{(1,2),(2,3)\},\{(3,4),(4,2)\},\{(4,5),(5,3)\},\{(5,6),(6,4)\}\bigr\}\\[1mm]
(1,2) & \bigl\{\{(2,3),(3,1)\},\{(3,4),(4,2)\},\{(4,5),(5,3)\},\{(5,6),(6,4)\}\bigr\}\\[1mm]
(4,2) & \bigl\{\{(1,2),(3,1)\},\{(2,3),(3,4)\},\{(4,5),(5,3)\},\{(5,6),(6,4)\}\bigr\}\\[1mm]
(3,4) & \bigl\{\{(1,2),(3,1)\},\{(2,3),(4,2)\},\{(4,5),(5,3)\},\{(5,6),(6,4)\}\bigr\}\\[1mm]
(4,5) & \bigl\{\{(1,2),(3,1)\},\{(2,3),(4,2)\},\{(3,4),(5,3)\},\{(5,6),(6,4)\}\bigr\}\\[1mm]
(5,3) & \bigl\{\{(1,2),(3,1)\},\{(2,3),(4,2)\},\{(3,4),(4,5)\},\{(5,6),(6,4)\}\bigr\}\\[1mm]
(6,4) & \bigl\{\{(1,2),(3,1)\},\{(2,3),(4,2)\},\{(3,4),(5,3)\},\{(4,5),(5,6)\}\bigr\}\\[1mm]
(5,6) & \bigl\{\{(1,2),(3,1)\},\{(2,3),(4,2)\},\{(3,4),(5,3)\},\{(4,5),(6,4)\}\bigr\}
\end{array}
\]
Each displayed family avoids Maker's already-claimed edge $m_2$, is pairwise disjoint, and blocks all winning sets in $\mathcal F'''$.
Hence Breaker wins in Subcase~2B as well.

Thus Breaker has a winning strategy in all cases, and therefore wins the Maker-Breaker directed triangle game on $\Pi(6)$.

\end{proof}

\subsection{\texorpdfstring{Maker's winning strategy on $\Pi(7)$}{Maker's winning strategy on Pi(7)}}

From this point onward, let
\[
\mathcal H:=\mathcal H(\Pi(7)).
\]
Our approach is to identify, after Breaker's first move, a cyclic sequence of hyperedges in which consecutive hyperedges overlap in a single element. Maker will then move only on these overlap elements and force a win.

\begin{defi}[Threat cycle] \label{defi:cycle}
Let $\mathcal H=(X,\mathcal F)$ be a $3$-uniform hypergraph.
A \emph{threat cycle} of length $t$ in $\mathcal H$ is a cyclic sequence
\[
(x_1,y_1,x_2,y_2,\ldots,x_t,y_t,x_1),
\]
such that the $2t$ displayed elements $x_1,y_1,\ldots,x_t,y_t$ are pairwise distinct, and
\[
\{x_i,y_i,x_{i+1}\}\in\mathcal F
\qquad\text{for each }i=1,\ldots,t,
\]
where $x_{t+1}:=x_1$.
We say that the threat cycle is \emph{based at} $x_1$.
\end{defi}

\begin{defi}[Main outer cycle] \label{defi:mainoutercycle}
The sequence
\[
C_0=((1,2),(3,1),(2,3),(4,2),(3,4),(5,3),(4,5),(6,4),(5,6),(7,5),(6,7),(1,6),(7,1),(2,7),(1,2))
\]
is called the \emph{main outer threat cycle}.

\end{defi}
This main outer threat cycle is illustrated in Figure \ref{fig:outercycle} by the hyperedges coloured grey.

\begin{figure}[htbp]
\centering
  \includegraphics[width=0.5\textwidth]{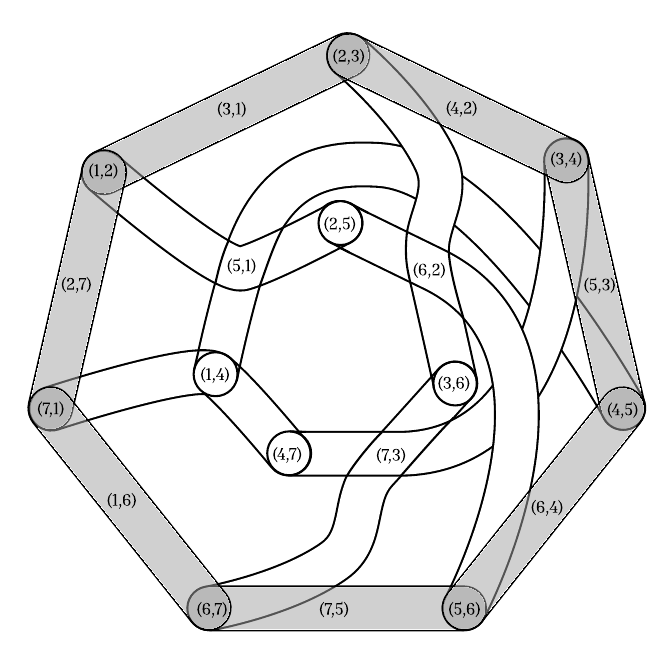}
  \caption{The Main outer cycle $C$}
  \label{fig:outercycle}
\end{figure}

\begin{defi}[Internal bridges]\label{defi: internal bridges}
In addition to $C_0$, the hypergraph contains the following internal paths:
\[
\begin{array}{rcl}
B_1 &:& ((1,2),(5,1),(2,5),(6,2),(5,6)),\\[1mm]
B_2 &:& ((2,3),(6,2),(3,6),(7,3),(6,7)),\\[1mm]
B_3 &:& ((3,4),(7,3),(4,7),(1,4),(7,1)),\\[1mm]
B_4 &:& ((4,5),(5,1),(1,4),(4,7),(7,1)).
\end{array}
\]

\end{defi}
Figure \ref{fig:bridges} illustrates these four internal bridges.

\begin{figure}[H]
\centering
  \includegraphics[width=0.5\textwidth]{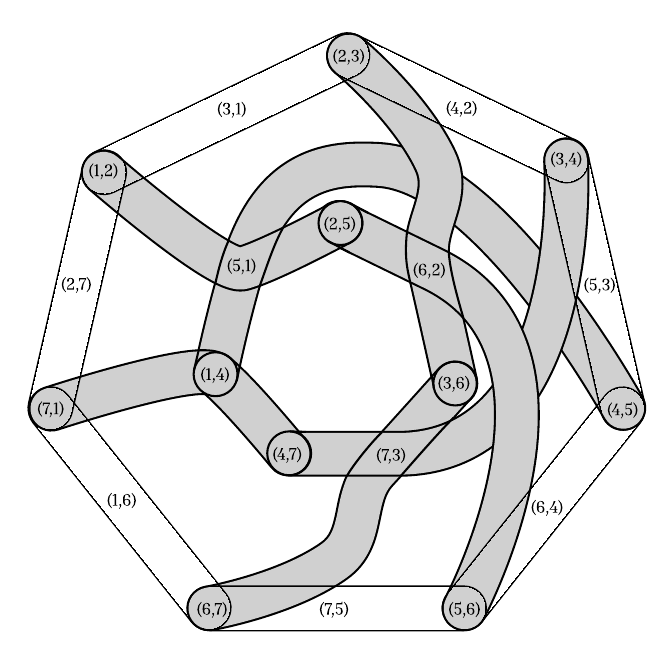}
  \caption{Internal bridges}
  \label{fig:bridges}
\end{figure}

\begin{lem}\label{lem:cycle_exists}
If $m_1=(1,2)$, then after Breaker deletes her first edge $b_1$, the surviving hypergraph
$\mathcal H'=\operatorname{Cut}(\mathcal H,b_1)$ contains a threat cycle based at $(1,2)$.
\end{lem}
\begin{proof}
We split according to Breaker's first move.

\textbf{Case 1: $b_1$ is not on the main outer threat cycle $C_0$.}
Then none of the elements of $C_0$ is removed, so $C_0$ itself survives as a threat cycle based at $(1,2)$.

\textbf{Case 2: $b_1$ lies on $C_0$.}
We distinguish two subcases.

\smallskip
\textbf{Subcase 2A: $b_1=(5,6)$.}
Then the sequence
\[
C_1=((1,2),(3,1),(2,3),(6,2),(3,6),(7,3),(6,7),(1,6),(7,1),(2,7),(1,2))
\]
is a surviving threat cycle based at $(1,2)$, since its successive triples are winning sets and none of its elements equals $(5,6)$.

\smallskip
\textbf{Subcase 2B: $b_1\in C_0\setminus\{(5,6)\}$.}
Partition the possible choices of $b_1$ into
\[
L=\{(2,7),(7,1),(1,6),(6,7),(7,5)\}
\quad\text{and}\quad
R=\{(3,1),(2,3),(4,2),(3,4),(5,3),(4,5),(6,4)\}.
\]

If $b_1\in L$, then
\[
C_2=((1,2),(3,1),(2,3),(4,2),(3,4),(5,3),(4,5),(6,4),(5,6),(6,2),(2,5),(5,1),(1,2))
\]
is a surviving threat cycle based at $(1,2)$.

If $b_1\in R$, then
\[
C_3=((1,2),(2,7),(7,1),(1,6),(6,7),(7,5),(5,6),(6,2),(2,5),(5,1),(1,2))
\]
is a surviving threat cycle based at $(1,2)$.

In each case, a threat cycle based at $(1,2)$, avoiding $b_1$, persists.
\end{proof}

\subsection{Maker's winning strategy via cycle-hopping}

Having established that a threat cycle based at $(1,2)$ persists in $\mathcal{H}' = \operatorname{Cut}(\mathcal{H}, b_1)$, regardless of Breaker’s first move, we now describe how Maker uses this cycle to win the game.

\begin{lem}\label{lem:cycle_hopping}
    Provided that Maker claims the element $m_1=(1,2)$, she is able to secure a win for herself in the Maker-Breaker directed triangle game on $\Pi(7)$.
\end{lem}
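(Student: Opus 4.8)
The plan is to use the cycle $\mathcal{C}$ guaranteed by Lemma~\ref{lem:cycle_exists} as a reservoir of "threats" for Maker, and to show that Maker can always maintain possession of such a cycle structure so that eventually one of its hyperedges is completed. The key observation is that $\mathcal{C}$ is a cyclic sequence $(v_0,v_1,\dots,v_m)$ with $v_0=v_m=(1,2)=m_1$ already claimed by Maker, and that along $\mathcal{C}$ the consecutive pairs $\{v_i,v_{i+1}\}$ each lie inside a winning set $F_i$. Maker's strategy is to "hop" along this cycle: after Breaker's move, Maker claims the next as-yet-unclaimed cycle vertex that keeps her on track to complete one of the $F_i$'s. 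Because $m_1=(1,2)$ sits on the cycle and appears in the first winning set $F_0$, Maker already holds one of the three edges of $F_0$; she then tries to grab the other two along the cycle.

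\medskip

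First I would set up the bookkeeping. By Lemma~\ref{lem:cycle_exists}, after Breaker's first move $b_1$, the surviving hypergraph $\mathcal{H}'=\operatorname{Cut}(\mathcal{H},b_1)$ contains a cycle $\mathcal{C}=(v_0,v_1,\dots,v_m)$ with $v_0=v_m=(1,2)$; inspecting the three cases of that lemma, the cycle produced is always \emph{long} (it has length at least, say, $10$) and is built out of the main outer cycle together with at least one intact internal bridge. The crucial structural fact, already noted after Definition~\ref{defi:mainoutercycle} and Definition~\ref{defi: internal bridges}, is that every vertex of $\mathcal{C}$ with an \emph{even} index is a "shared" vertex lying in two consecutive winning sets of the cycle, whereas every \emph{odd}-indexed vertex lies in exactly one. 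Next I would describe the cycle-hopping move precisely: Maker maintains an index $j$ (initially $j=0$, corresponding to her claimed edge $v_0=m_1$) and at each of her turns claims $v_{j+1}$ if it is still available, advancing $j\mapsto j+1$; if Breaker has already deleted $v_{j+1}$, Maker uses the fact that $v_j$ (being a shared vertex when $j$ is even, or by symmetry being reachable from an alternative bridge) also lies on the \emph{other} side of the cycle, and hops to the corresponding neighbour going the other way around $\mathcal{C}$, continuing from there. The point is that Breaker, with only one deletion per turn, cannot sever $\mathcal{C}$ in two places simultaneously before Maker closes up a winning set; a double-threat / forced-win counting argument (Maker is always two moves from completing the current $F_j$ while Breaker can only block one of those two edges per round) then finishes the game in Maker's favour.

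\medskip

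The \textbf{main obstacle} I anticipate is making the hopping rule genuinely robust against \emph{every} Breaker strategy, not just the first move: after the first round the surviving hypergraph keeps shrinking, and one must argue that the "alternative bridge" or "go the other way" escape is always available at the moment it is needed. Concretely, the danger is that Breaker plays inside the cycle repeatedly, trying to whittle $\mathcal{C}$ down faster than Maker can traverse it. To handle this I would argue that Maker always has a standing \emph{pair of disjoint threats}: two winning sets $F,F'$ in which Maker already owns two of the three edges, sharing no uncclaimed edge. Breaker can kill at most one per turn, so Maker completes the other. Establishing that such a double threat arises — and persists — quickly enough after $m_1=(1,2)$ is the delicate part; it amounts to checking that the length and branching of $\mathcal{C}$ (which is at least $10$, with at least two independent internal bridges available in the worst case of Lemma~\ref{lem:cycle_exists}) leave Breaker perpetually one tempo short. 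Once the double-threat invariant is in place, the conclusion that Maker wins the directed-triangle game on $\Pi(7)$, and hence (combining with Lemma~\ref{lem: Breaker wins on Pi(6)} and the earlier lemmas) Theorem~\ref{thm:thresholdunbiased}, follows immediately.
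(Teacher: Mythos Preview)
Your proposal has the right overall idea (use the cycle from Lemma~\ref{lem:cycle_exists} and hop along it), but the hopping rule you describe is the wrong one, and this is why you are forced into the robustness worries in your final paragraph. You have Maker claim $v_{j+1}$ at each turn, i.e.\ walk through the cycle one step at a time. The paper instead has Maker claim only the \emph{even}-indexed cycle vertices $v_0,v_2,v_4,\dots$, skipping over each odd-indexed one. The point is that the even-indexed vertices are exactly the ones shared between two consecutive hyperedges of $\mathcal{C}$: once Maker owns $v_{2i-2}$ and $v_{2i}$, the hyperedge $\{v_{2i-2},v_{2i-1},v_{2i}\}$ is an \emph{immediate} threat (Maker has two of its three elements), so Breaker is \emph{forced} to delete $v_{2i-1}$ on her very next move. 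Thus after $b_1$ Breaker never has a free choice again; there is no issue of Breaker ``whittling $\mathcal{C}$ down faster than Maker can traverse it'', and no need for alternative bridges or going the other way.

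With this corrected rule the endgame is automatic: when Maker claims $v_{k-2}$ (and $k$ is even, as you correctly note), she simultaneously threatens the two hyperedges $\{v_{k-4},v_{k-3},v_{k-2}\}$ and $\{v_{k-2},v_{k-1},v_k\}$ (the latter because $v_k=v_0=m_1$ is already hers). Breaker can block only one, and Maker wins on the next move. Your version, where Maker is ``two moves from completing the current $F_j$'', does not force Breaker and so does not lead to this clean double threat; that is the genuine gap.
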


\begin{proof}
Maker first claims $x_1:=(1,2)$.
After Breaker's first move $b_1$, Lemma \ref{lem:cycle_exists} gives a surviving threat cycle
\[
C=(x_1,y_1,x_2,y_2,\ldots,x_t,y_t,x_1)
\]
based at $x_1=(1,2)$.

Maker now claims $x_2,x_3,\ldots,x_t$ in this order.

Suppose $2\le r\le t-1$, and Maker has just claimed $x_r$.
Then the hyperedge $\{x_{r-1},y_{r-1},x_r\}$ contains two Maker-claimed elements, namely $x_{r-1}$ and $x_r$.
If Breaker does not delete $y_{r-1}$ on her next move, then Maker claims $y_{r-1}$ on the following turn and wins.
Hence Breaker is forced to delete $y_{r-1}$.

Thus, after Maker has claimed $x_2,\dots,x_{t-1}$, Breaker has been forced to delete
$y_1,\dots,y_{t-2}$ in order.

Now Maker claims $x_t$.
At this moment, both hyperedges
\[
\{x_{t-1},y_{t-1},x_t\}
\qquad\text{and}\qquad
\{x_t,y_t,x_1\}
\]
are immediate threats: each already contains two Maker-claimed elements.
Breaker can delete at most one of $y_{t-1}$ and $y_t$.
Maker then claims the other one on her next move and completes a winning set.

Therefore Maker wins on $\Pi(7)$.
\end{proof}

Lemma \ref{lem:cycle_exists} and Lemma \ref{lem:cycle_hopping} together establish Maker's winning strategy on $\Pi(7)$.

\subsection{\texorpdfstring{Maker's winning strategy on $\Pi(n)$, for all $n > 7$}{Maker's winning strategy on Pi(n), for all n > 7}}

\begin{lem}[Extension to larger tournaments]\label{lem:extension}
Maker has a winning strategy in the Maker-Breaker directed triangle game on $\Pi(n)$ for all $n\ge 7$.
\end{lem}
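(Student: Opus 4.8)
The plan is to reduce the general case to the already-settled base case $n=7$ via an \emph{embedding argument} — the same template later reused for the random tournament. The crucial observation is that $\Pi(7)$ occurs inside $\Pi(n)$ as the induced subtournament on the vertices $\{1,2,\dots,7\}$: the orientation of the edge between $i<j$ in $\Pi(n)$ depends only on the parity of $i+j$, and these labels are unchanged by passing to the induced subtournament, so that subtournament \emph{is} $\Pi(7)$ on the nose. Hence every directed triangle of $\Pi(7)$ is a directed triangle of $\Pi(n)$; writing $\mathcal H_0=(X_0,\mathcal F_0)$ for the hypergraph of the $\Pi(7)$-game and $\mathcal H=(X,\mathcal F)$ for that of the $\Pi(n)$-game, we have $X_0=E(\Pi(7))\subseteq E(\Pi(n))=X$ and $\mathcal F_0\subseteq\mathcal F$.

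Maker's strategy on $\Pi(n)$ is then to play her $\Pi(7)$ winning strategy — the cycle-hopping strategy of Lemmas \ref{lem:cycle_exists} and \ref{lem:cycle_hopping} — on the sub-board $X_0$, while maintaining an imaginary copy of the $\mathcal H_0$-game. She opens with $m_1=(1,2)\in X_0$. Thereafter, whenever Breaker claims an edge lying in $X_0$, Maker records it as Breaker's move in the imaginary $\mathcal H_0$-game and answers with the edge prescribed by her $\Pi(7)$ strategy (which again lies in $X_0$); whenever Breaker claims an edge in $X\setminus X_0$, Maker treats it as Breaker \emph{passing} in the imaginary game and simply plays her next $\Pi(7)$-strategy move in $X_0$ anyway. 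In particular Maker never needs to ``waste'' a move outside $X_0$.

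Correctness rests on two routine observations. First, a Maker winning strategy remains winning if Breaker is additionally allowed to pass, and the extra board elements $X\setminus X_0$ and the extra winning sets $\mathcal F\setminus\mathcal F_0$ are irrelevant to Maker, who only aims to occupy some member of $\mathcal F_0$; concretely, in the cycle-hopping strategy, whenever Breaker declines to delete the currently threatened element Maker just completes a directed triangle immediately, so Breaker's ``passes'' only make Maker win sooner. Second, Maker cannot run out of usable edges in $X_0$: the $\Pi(7)$ strategy completes a directed triangle within a bounded number of her own moves (at most one more than the number of hyperedges on the cycle she is hopping along, hence at most $8$), which is far below the $\binom{7}{2}=21$ edges of $\Pi(7)$. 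Therefore Maker ends up owning all three edges of some directed triangle of $\Pi(7)$, which is a directed triangle of $\Pi(n)$, so Maker wins on $\Pi(n)$. Combined with the base case $n=7$, this yields Maker's win for every $n\ge 7$.

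I expect there to be essentially no analytic obstacle here; the lemma is a soft extension argument. The one place to be careful is the ``treat an outside move as a pass'' reduction in the third step: one should state precisely why giving Breaker the option to pass cannot shrink the set of positions from which Maker wins, and hence why Maker's strategy never dictates an illegal or losing continuation. Making this bookkeeping airtight — rather than supplying any new combinatorial idea — is where the write-up will spend its effort.
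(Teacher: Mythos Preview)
Your proposal is correct and follows essentially the same approach as the paper: embed $\Pi(7)$ as the induced subtournament on $\{1,\dots,7\}\subseteq V_n$, have Maker play her cycle-hopping strategy there, and treat any Breaker move outside this sub-board as a pass (which, as you note, only lets Maker complete the threatened triangle immediately). The paper's write-up is terser, but the argument and the key observation that a Breaker deviation from blocking the current threat hands Maker the win on her next move are identical.
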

\begin{proof}
Fix the vertex set $V_7=\{1,2,3,4,5,6,7\}\subset V_n$.
The sub-tournament induced by $V_7$ is exactly $\Pi(7)$.
Maker restricts all her moves to this induced copy and follows her winning strategy from Lemma \ref{lem:cycle_hopping}.

Any edge deleted by Breaker outside this induced copy has no effect on the play inside it.
Any edge deleted by Breaker inside it is handled exactly as in the game on $\Pi(7)$.
Hence the restricted game inside $V_7$ is a copy of the already solved game on $\Pi(7)$, and Maker wins there.

Therefore Maker wins on $\Pi(n)$ for every $n\ge 7$.

\end{proof}
Lemma \ref{lem:cycle_exists}, Lemma \ref{lem:cycle_hopping}, and Lemma \ref{lem:extension} together prove Theorem \ref{thm:thresholdunbiased}.

This result addresses the game played on a parity tournament, which is regular when $n$ is odd and near-regular when $n$ is even. Intuitively, one could argue that for an arbitrary tournament, which might be less regular, the corresponding game might be easier for Breaker to win, since the number of winning sets would be smaller in the case of more irregular and imbalanced score sequences. Thus, we could guess the board size threshold for an arbitrary tournament to be greater than or equal to $7$. However, it remains to find the value of board size thresholds for tournaments with given arbitrary score sequences.

\subsection{\texorpdfstring{Bounds on $b^*(n)$}{Bounds on b*(n)}}

 We consider a biased variant of the Maker-Breaker directed triangle game on the parity tournament. Particularly, we address the $(1:b)$ Maker-Breaker directed triangle game on $\Pi(n)$ with the intention of identifying bounds on the bias threshold, $b^*(n)$, such that for $b\ge b^*(n)$, Breaker has a winning strategy and for $b<b^*(n)$, Maker has a winning strategy.
 
 We use the following criterion for Maker's win, to derive a lower bound on $b^*(n)$:
 
\begin{theo}[See \cite{beck2008combinatorial}]\label{thm:BeckMakerCriterion}
    Let $\mathcal{F}$ be a family of subsets of a finite set $X$. Suppose $a$ and $b$ are positive integers, and define
    \[
    \Delta_2(\mathcal{F}) \ =\  \max\Bigl\{\,\Bigl|\{A\in\mathcal{F}:\{u,v\}\subset A\}\Bigr| : u,v\in X,\  u\neq v \Bigr\}.
    \]
    If
    \begin{equation}
      \sum_{A\in \mathcal{F}} \left(\frac{a+b}{a}\right)^{-|A|} \ >\  \frac{a^2\,b^2}{(a+b)^3}\,\Delta_2(\mathcal{F})\,|X|,
     \label{Beck'sMakerinequality}  
    \end{equation}
    then Maker (as the first player) has a winning strategy for the $(a:b)$ biased Maker-Breaker game on $(X,\mathcal{F})$.
\end{theo}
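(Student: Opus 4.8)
The plan is to prove this the way Beck does it, via a potential-function (weight-function) argument run in Maker's favour, i.e.\ an ``Erdős--Selfridge method turned around''. Set $\mu = (a+b)/a$, and at any point of the play let $X_M$ and $X_B$ denote the sets of elements claimed so far by Maker and by Breaker. Call a set $A \in \mathcal{F}$ \emph{alive} if $A \cap X_B = \emptyset$, and attach to each alive set the weight $w(A) = \mu^{-|A\setminus X_M|}$ (so that an alive set Maker has entirely claimed would carry weight $1$, and Maker wins precisely when some alive set reaches weight $1$). Put $\Phi = \sum_{A\text{ alive}} w(A)$; at the start $\Phi = \Phi_0 = \sum_{A\in\mathcal{F}} \mu^{-|A|}$, exactly the left-hand side of \eqref{Beck'sMakerinequality}. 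Maker's prescribed strategy is the greedy one: in each of her moves she claims an unclaimed element $x$ maximising the \emph{danger} $d(x) = \sum_{A\text{ alive},\, x\in A} w(A)$, i.e.\ the element whose capture raises $\Phi$ the most.

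First I would analyse a single round. When Maker claims $x$, each alive set through $x$ has its weight multiplied by $\mu$, so $\Phi$ increases by $(\mu-1)d(x)$; when Breaker claims $y$, each alive set through $y$ dies, so $\Phi$ decreases by the current value of $d(y)$. As long as Maker has not already won, every alive set has $|A\setminus X_M|\ge 1$, hence $\sum_{x\text{ unclaimed}} d(x) = \sum_{A\text{ alive}} |A\setminus X_M|\,w(A) \ge \Phi$; since there are at most $|X|$ unclaimed elements, some available move has danger at least $\Phi/|X|$, so Maker's greedy move gains at least $(\mu-1)\Phi/|X|$ in that round (with her remaining $a-1$ moves only helping further). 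Next I would bound from above the total weight Breaker can destroy in her $b$ moves of the round. Splitting each $d(y)$ according to whether the alive set through $y$ is untouched by Maker or already meets $X_M$: an alive set meeting $X_M$ contains $y$ together with some $z\in X_M$, and at most $\Delta_2(\mathcal{F})$ sets pass through any fixed pair, which -- summed over the (at most $|X|$) elements of $X_M$ and over Breaker's $b$ choices -- caps the contribution of the ``touched'' sets; the ``untouched'' sets each carry full weight $\mu^{-|A|}$ and their aggregate is controlled by $\Phi_0$ together with the greedy guarantee on $\max_x d(x)$. Assembling this gives a per-round destruction bound of the shape $b\cdot(\text{const})\cdot(\mu^{-1}\Delta_2(\mathcal{F}) + \text{lower order})$.

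The endgame is then the usual finiteness contradiction: if Maker never completes a winning set, then once the board $X$ is exhausted every set is dead, so $\Phi = 0$; but if \eqref{Beck'sMakerinequality} holds, the round-by-round comparison of the gain $(\mu-1)\Phi/|X|$ against Breaker's destruction shows $\Phi$ is non-decreasing, hence stays at least $\Phi_0 > 0$ forever -- impossible. Therefore Maker does complete some $A\in\mathcal{F}$, i.e.\ she wins. The step I expect to be the main obstacle is exactly the upper bound on Breaker's one-round destruction: one must argue that the only way a single Breaker move can erase a large amount of weight is by killing sets on which Maker has already accumulated many elements, and then use $\Delta_2(\mathcal{F})$ and the board size $|X|$ to limit how much such ``nearly finished'' structure can be piled onto one element -- while keeping the constants tight enough to produce precisely the factor $a^2b^2/(a+b)^3$ in \eqref{Beck'sMakerinequality}. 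Some of the slack needed for this is recovered by crediting all $a$ of Maker's moves per round in the gain estimate rather than just the first.
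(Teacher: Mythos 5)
This statement is not proved in the paper at all: it is quoted verbatim from Beck's monograph \cite{beck2008combinatorial} and used as a black box to obtain the lower bound on $b^*(n)$, so there is no in-paper proof to compare yours against. Judged on its own, your sketch correctly identifies the method behind Beck's criterion (a Maker-side weight function $\Phi=\sum_{A\ \mathrm{alive}}\mu^{-|A\setminus X_M|}$ with $\mu=(a+b)/a$, greedy maximisation of the danger $d(x)$, and an endgame contradiction from $\Phi_{\mathrm{end}}>0$), but it has a genuine gap precisely at the step you flag, and moreover the round-by-round accounting you propose is not the one that works.

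Concretely: the potential is \emph{not} non-decreasing under the correct strategy, and the comparison is not between Breaker's destruction and the gain $(\mu-1)\Phi/|X|$. (The bound $\max_x d(x)\ge\Phi/|X|$ is the engine of a different, $\Delta_2$-free weak-win criterion; here the factor $|X|$ on the right-hand side of \eqref{Beck'sMakerinequality} enters as a bound on the \emph{number of rounds}, roughly $|X|/(a+b)$, not through $\Phi/|X|$.) The actual argument pairs Maker's gain against the \emph{main term} of Breaker's destruction: Maker's $a$ greedy picks gain at least $a(\mu-1)\bar d=b\,\bar d$, which cancels the $\le b\,\bar d$ that Breaker's $b$ elements could destroy as measured by their dangers \emph{before} Maker moved; the only uncancelled loss comes from the inflation of those dangers caused by Maker's own picks, and for each of the $ab$ (Maker element, Breaker element) pairs this inflation is at most $(\mu-1)\mu^{-2}\Delta_2(\mathcal F)$ because a live set containing both elements has weight at most $\mu^{-2}$ and at most $\Delta_2(\mathcal F)$ sets contain a given pair. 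This yields a per-round loss of at most $ab\cdot(\mu-1)\mu^{-2}\Delta_2(\mathcal F)=\tfrac{a^2b^2}{(a+b)^2}\Delta_2(\mathcal F)$, and summing over at most $|X|/(a+b)$ rounds gives exactly the right-hand side of \eqref{Beck'sMakerinequality}; positivity of $\Phi_{\mathrm{end}}$ then forces a live set fully claimed by Maker. Your proposal never performs this cancellation, leaves the constant $a^2b^2/(a+b)^3$ unaccounted for, and asserts monotonicity of $\Phi$, which is false; so as written it does not constitute a proof, though the framework you chose is the right one.
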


In our biased $(1:b)$ directed triangle game on $\Pi(n)$, $a=1$, and each winning set $F \in \mathcal{F}$ has size $|F| = 3$. From the structure of directed triangles, we observe that $\Delta_2(\mathcal{F}) = 1$, since no two edges can appear together in more than one triangle (because specifying two edges in a directed triangle uniquely determines the third). The elements in $X$ correspond to the directed edges of $\Pi(n)$, which is our board. Thus, $|X| = \binom{n}{2}$.

Let $w(n)$ denote the number of directed triangles in $\Pi(n)$. 
Given these conditions, Inequality \ref{Beck'sMakerinequality} from Theorem \ref{thm:BeckMakerCriterion} would boil down to
\begin{align}
    w(n)\,(1+b)^{-3}  >  \frac{b^2\,|X|}{(1+b)^3},
 \end{align}
which gives us
\begin{align}
    b  <  \sqrt{\frac{w(n)}{|X|}}.\label{eqn:1}
\end{align}
Thus, for all $b$ satisfying this inequality, Maker will have a winning strategy in the corresponding $(1:b)$ Maker-Breaker directed triangle game. Equivalently, the bias threshold $b^*(n)$ must satisfy
\[
b^*(n) \ \ge\  \sqrt{\frac{w(n)}{|X|}}.
\]
We have 
\[
w(n)
\ =\ 
\begin{cases}
\displaystyle \frac{n^3 - n}{24}, & \text{if $n$ is odd},\\[6pt]
\displaystyle \frac{n^3 - 4n}{24}, & \text{if $n$ is even},
\end{cases}
\]
and
\[
|X| = \frac{n(n-1)}{2}.
\]
It follows that
\[
\sqrt{\frac{w(n)}{|X|}}
\ =\ 
\begin{cases}
\displaystyle \sqrt{\frac{n+1}{12}} = \displaystyle \sqrt{\left(\frac{1}{12} +o(1)\right)n}, & \text{if $n$ is odd},\\[6pt]
\displaystyle \sqrt{\frac{(n-2)(n+2)}{12(n-1)}} = \displaystyle \sqrt{ \left(\frac{1}{12} +o(1)\right)n}, & \text{if $n$ is even.}
\end{cases}
\]
Thus, for large enough $n$, Theorem \ref{thm:BeckMakerCriterion} implies that Maker can force a win if
\[
b < \sqrt{\left(\frac{1}{12} +o(1)\right)n},
\]
and consequently, the bias threshold $b^*(n)$ satisfies the lower bound stated in \eqref{eqn:bias threshold inequality} from Theorem \ref{thm:bias-threshold}.

For the upper bound, our approach is to compare the directed triangle game to its undirected counterpart, and show that Breaker's win in the undirected triangle game implies Breaker's win in the directed triangle game as well.

Let
\[
X=\binom{V_n}{2},
\]
the set of unordered edges on the vertex set $V_n$.

Define the hypergraph
\[
\mathcal T_{\mathrm{und}}=(X,\mathcal F_{\mathrm{und}})
\]
for the undirected Maker-Breaker triangle game on $K_n$ by letting
$\mathcal F_{\mathrm{und}}$ consist of all $3$-subsets of $X$ that form an undirected triangle in $K_n$.

Define also
\[
\mathcal T_{\mathrm{dir}}=(X,\mathcal F_{\mathrm{dir}})
\]
for the directed triangle game on $\Pi(n)$ as follows:
for every $3$-subset $\{\{i,j\},\{j,k\},\{i,k\}\}\subset X$, we declare it to belong to
$\mathcal F_{\mathrm{dir}}$ if and only if the corresponding three directed edges in $\Pi(n)$ form a directed triangle.

With this identification of both games on the same ground set $X$, every directed triangle is in particular an undirected triangle. Hence
\[
\mathcal F_{\mathrm{dir}}\subseteq \mathcal F_{\mathrm{und}},
\]
that is,
\[
\mathcal T_{\mathrm{dir}}\subseteq \mathcal T_{\mathrm{und}}.
\]

We give below a lemma that allows us to `borrow' a condition for Breaker's win on the undirected triangle game to the premise of the directed triangle game on the same board, owing to a monotonicity argument common in Maker-Breaker games, and similar to the one we used in Lemma \ref{lem: breaker monotonicity}.

\begin{lem}\label{lem:transfer}
Let $\mathcal{H}=(X,\mathcal{F})$ and $\mathcal{H}'=(X,\mathcal{F}')$ be hypergraphs on the same vertex set such that $\mathcal{H}' \subseteq \mathcal{H}$. 
If Breaker has a winning strategy in the $(a:b)$ Maker-Breaker game on $\mathcal{H}$, then Breaker also has a winning strategy in the $(a:b)$ Maker-Breaker game on $\mathcal{H}'$.
\end{lem}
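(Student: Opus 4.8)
The plan is to exploit the fact that the two games are played on literally the same board $X$, with the same bias parameters $(a:b)$ and the same turn order; the only difference between them is the family of winning sets, which we are shrinking from $\mathcal{F}$ down to $\mathcal{F}'\subseteq\mathcal{F}$. So a winning strategy for Breaker on the richer hypergraph should transfer essentially verbatim.

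Concretely, I would fix a winning strategy $\sigma$ for Breaker in the $(a:b)$ game on $\mathcal{H}=(X,\mathcal{F})$, and have Breaker simply follow $\sigma$ while playing the $(a:b)$ game on $\mathcal{H}'=(X,\mathcal{F}')$. Since both games claim elements of the same set $X$, at every stage the set of still-available elements is identical in the two games, so the response prescribed by $\sigma$ is always a legal move on $\mathcal{H}'$; hence the play is well-defined and the sequences of claimed elements in the two games coincide. (If one wishes to be careful about early termination: Breaker following $\sigma$ guarantees that Maker never completes a set of $\mathcal{F}$, hence never completes a set of $\mathcal{F}'$ either, so the game on $\mathcal{H}'$ runs to exhaustion exactly as the game on $\mathcal{H}$ does.)

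It then remains to check the winning condition. When the game on $\mathcal{H}'$ ends, let $M\subseteq X$ denote the set of elements claimed by Maker. Since $M$ is exactly the set Maker would have claimed in the game on $\mathcal{H}$ against $\sigma$, and $\sigma$ is a winning strategy there, $M$ contains no member of $\mathcal{F}$. Because $\mathcal{F}'\subseteq\mathcal{F}$, a fortiori $M$ contains no member of $\mathcal{F}'$; that is, Maker has not occupied any winning set of $\mathcal{H}'$. Hence Breaker wins the $(a:b)$ game on $\mathcal{H}'$.

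There is essentially no obstacle here beyond the bookkeeping of keeping the two plays synchronised; the content is the elementary monotonicity of Maker--Breaker games under removal of winning sets. The one point worth stating carefully is that ``Breaker wins'' means ``Maker never occupies a full winning set'', a property that is manifestly preserved when the winning-set family shrinks — and this is precisely why the direction of the inclusion $\mathcal{T}'\subseteq\mathcal{T}$ is the useful one for importing a Breaker win from the undirected triangle game to the directed one.
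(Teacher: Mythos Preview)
Your proof is correct and essentially the same as the paper's: both rest on the elementary monotonicity observation that since $\mathcal{F}'\subseteq\mathcal{F}$, Maker occupying a winning set of $\mathcal{H}'$ would mean she occupies one of $\mathcal{H}$. The only cosmetic difference is that you transfer Breaker's strategy directly, whereas the paper phrases the same argument by contradiction via a hypothetical winning strategy for Maker on $\mathcal{H}'$.
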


\begin{proof}
Assume for contradiction that Breaker has a winning strategy on $\mathcal{H}$, but Maker has one on $\mathcal{H}'$. 
Then, by following her winning strategy on $\mathcal{H}'$, Maker could ensure the complete claim of some hyperedge $F' \in \mathcal{F}'$. 
Since $\mathcal{F}' \subseteq \mathcal{F}$, the same set $F'$ also belongs to $\mathcal{F}$, so this strategy would yield a win for Maker on $\mathcal{H}$ as well. 
This contradicts the assumption that Breaker’s strategy on $\mathcal{H}$ is winning. 
Hence, Maker cannot have a winning strategy on $\mathcal{H}'$, and the claim follows.
\end{proof}

\begin{theo}[See \cite{glazik2022new}]
    In the biased $(1:b)$ Maker-Breaker undirected triangle game (with Maker starting first), Breaker has a winning strategy if \[b \ge \sqrt{\left(\frac{8}{3} + o(1)\right)n},\] for sufficiently large $n$.
\end{theo}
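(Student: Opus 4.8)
This statement is the main theorem of Glazik and Srivastav \cite{glazik2022new} (it is the only ingredient we borrow for the upper bound in Theorem \ref{thm:bias-threshold}, via Lemma \ref{lem:transfer}); the plan of its proof is to equip Breaker with an explicit — in the refined versions, partially randomised — strategy that keeps \emph{Maker's graph triangle-free} for the entire game. If Breaker can maintain this, Maker never owns all three edges of a triangle, so Breaker wins. Since the only way Maker can approach a triangle is through \emph{cherries} (paths of length two in her graph), the guiding invariant is that, after each of Breaker's rounds, there is no \emph{open threat}: no unclaimed edge $xy$ for which some vertex $z$ has $xz,yz\in M$, where $M$ denotes Maker's current graph. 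This holds vacuously at the start, and if maintained it forbids Maker from ever closing a triangle.

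First I would split each Breaker round into \emph{emergency} moves and \emph{preventive} moves. When Maker claims an edge $uv$, the new open threats are exactly the unclaimed edges $vw$ with $w\in N_M(u)$ together with the unclaimed edges $uw$ with $w\in N_M(v)$; moreover $N_M(u)\cap N_M(v)=\varnothing$, since a common Maker-neighbour $z$ of $u$ and $v$ would have made $uv$ an open threat just before Maker's move, contradicting the invariant. Hence Breaker must spend at most $d_M(u)+d_M(v)$ of her $b$ moves on emergencies, and the whole argument reduces to showing that Breaker can always afford this — that $d_M(u)+d_M(v)$, evaluated at the edge $uv$ Maker actually picks, never exceeds $b$. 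Breaker enforces this with her remaining (preventive) moves, which she uses to claim edges among pairs of vertices of large Maker-degree — in the Balogh–Samotij version, a suitably chosen random batch of them — thereby keeping the Maker-degrees, and hence the size of any future emergency burst, under control.

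The quantitative heart is a potential function $\Phi$: a weighted combination of $\sum_v\binom{d_M(v)}{2}$-type terms and the number of cherries (and near-cherries) in $M$, which bounds Maker's total future emergency workload. One shows that over the $\Theta(n^2/b)$ rounds of the game the increase of $\Phi$ forced by Maker is at most what Breaker's preventive moves can absorb, provided $b\ge\sqrt{(8/3+o(1))n}$; the constant $8/3$ comes out of optimising the weights in $\Phi$ (equivalently, solving the small linear program that balances the rate $\le d_M(u)+d_M(v)$ at which Maker creates emergencies against the number of preventive edges Breaker must invest to keep any single Maker-degree below $b$). I expect the main obstacle to be precisely this optimisation together with the attendant case analysis — choosing the preventive rule and the potential weights so that the amortised per-round cost is genuinely $\le b$ with the best leading constant. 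This is exactly where \cite{glazik2022new} improves on the $2\sqrt n$ bound of Chvátal and Erdős and the $1.958\sqrt n$ bound of Balogh and Samotij, and it is delicate because emergency and preventive needs compete for the same budget of $b$ moves in every round.
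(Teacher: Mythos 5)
This theorem is not proved in the paper at all: it is imported verbatim from \cite{glazik2022new} and used as a black box, combined with Lemma \ref{lem:transfer}, to obtain the upper bound in Theorem \ref{thm:bias-threshold}. So there is no in-paper argument to compare your proposal against; the only question is whether your sketch would itself constitute a proof of the cited result.

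It does not, though the architecture you describe is a faithful account of how Chv\'atal--Erd\H{o}s-type Breaker strategies for the triangle game work, including the Glazik--Srivastav refinement: maintain the invariant that Maker's graph is triangle-free and carries no open threat, observe that a Maker move $uv$ creates at most $d_M(u)+d_M(v)$ new threats (your disjointness remark $N_M(u)\cap N_M(v)=\varnothing$ under the invariant is correct), split Breaker's $b$ moves per round into emergency and preventive moves, and control the emergency burst size via a potential function. The genuine gap is that the quantitative heart is never executed: you do not specify the potential $\Phi$, the preventive rule, or the amortised accounting that shows the per-round emergency cost stays below $b$ precisely when $b\ge\sqrt{(8/3+o(1))n}$. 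You explicitly defer the derivation of the constant $8/3$ to an ``optimisation'' you expect to be the main obstacle -- but that optimisation \emph{is} the theorem; without it the argument only yields that some bound of order $\sqrt{n}$ suffices (which is already the Chv\'atal--Erd\H{o}s $2\sqrt{n}$ result), not the stated constant. There is also a small unaddressed point in your reduction: it is not enough that $d_M(u)+d_M(v)\le b$ at the edge Maker picks; Breaker must handle all emergencies \emph{and} still have enough preventive moves left in the same round to keep future degrees controlled, and showing these two demands can be met simultaneously from the single budget $b$ is exactly where the case analysis and the constant live. As a description of the proof's shape your proposal is accurate; as a proof it is incomplete, which is acceptable here only because the paper itself treats the result as a citation rather than proving it.
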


This result, along with Lemma \ref{lem:transfer}, implies that Breaker would also win the biased $(1:b)$ Maker-Breaker \emph{directed} triangle game, for $n$ large enough, whenever $b$ satisfies 
\begin{equation}
    b \ge \sqrt{\left(\frac{8}{3} + o(1)\right)n}.
\end{equation}
Consequently, the bias threshold $b^*(n)$ satisfies the upper bound in \eqref{eqn:bias threshold inequality} of Theorem \ref{thm:bias-threshold}.

\section{Proof of our result stated in Subsection \ref{subsec:randomtournament}}

We now consider the random tournament $T(n,p)$ from Definition \ref{defi:randomtournament}, where
$p\in(0,1)$ is fixed.
The cases $p=0$ and $p=1$ are transitive and hence trivially won by Breaker, so we restrict to $p\in(0,1)$.

Since Maker wins on $\Pi(7)$ by Theorem \ref{thm:thresholdunbiased}, it is enough to show that
$T(n,p)$ contains an induced copy of $\Pi(7)$ with probability tending to $1$.

\begin{lem}\label{lem:existence}
Let $T(n,p)$ be a random tournament with fixed $p\in(0,1)$.
Then
\[
\lim_{n\to\infty}\mathbb P\bigl(T(n,p)\text{ contains an induced copy of }\Pi(7)\bigr)=1.
\]
\end{lem}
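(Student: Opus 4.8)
The plan is to exhibit $\Pi(7)$ as an induced sub-tournament of $T(n,p)$ with probability tending to $1$ by a standard second-moment (or disjoint-blocks) argument. The first observation is that $\Pi(7)$ is a fixed tournament on $7$ vertices, so what we need is: among the $n$ vertices of $T(n,p)$, some ordered $7$-tuple $(i_1<i_2<\dots<i_7)$ has all $\binom{7}{2}=21$ edges oriented exactly as prescribed by the parity rule on $\{1,\dots,7\}$. For a fixed increasing $7$-tuple, each of the $21$ edges is oriented ``correctly'' independently with probability $p$ or $1-p$ (depending on whether the target orientation agrees with the ``$i\to j$ for $i<j$'' convention), so the tuple is a copy of $\Pi(7)$ with probability exactly $q:=p^{k}(1-p)^{21-k}$ for some fixed $k\in\{0,\dots,21\}$ determined by $\Pi(7)$; crucially $q>0$ since $p\in(0,1)$.

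The cleanest way to finish is to avoid the second moment altogether and use \emph{disjoint blocks}: partition $\{1,\dots,n\}$ into $m=\lfloor n/7\rfloor$ consecutive blocks $B_1,\dots,B_m$ of $7$ vertices each. For each block $B_\ell$, write its vertices in increasing order; the event $A_\ell$ that $B_\ell$ induces a copy of $\Pi(7)$ depends only on the orientations of edges \emph{within} $B_\ell$, and these edge-sets are pairwise disjoint across blocks, so the events $A_1,\dots,A_m$ are mutually independent, each with probability $q>0$. Hence
\[
\mathbb{P}\bigl(T(n,p)\text{ contains no induced }\Pi(7)\bigr)\;\le\;\mathbb{P}\Bigl(\textstyle\bigcap_{\ell=1}^{m}\overline{A_\ell}\Bigr)\;=\;(1-q)^{m}\;=\;(1-q)^{\lfloor n/7\rfloor}\;\xrightarrow[n\to\infty]{}\;0,
\]
which gives the claim. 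One should double-check that the restriction of the global ``$i<j$'' ordering to each block agrees, up to the unique order-isomorphism $B_\ell\to\{1,\dots,7\}$, with the ordering used to define $\Pi(7)$ — this is immediate since order-isomorphisms preserve the ``$i<j$'' relation, and the parity rule of Definition \ref{defi:Parity rule} refers to the parity of $i+j$; here one must be slightly careful, because parity is \emph{not} preserved under order-isomorphism. This is the one genuine subtlety.

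To handle it, note that the parity tournament on any block is determined by whether the two endpoints have the same or opposite parity (and which is smaller), and this ``same/opposite parity'' pattern on an increasing $7$-tuple depends only on the parities of the $7$ chosen indices, not on their exact values. So rather than fixing the consecutive blocks $B_\ell=\{7\ell-6,\dots,7\ell\}$ outright, I would restrict attention to blocks whose parity pattern matches that of $\{1,2,3,4,5,6,7\}$ — e.g. take blocks of the form $\{c+1,c+2,\dots,c+7\}$ with $c$ even, of which there are still $\Theta(n)$ pairwise-disjoint ones. For such a block the unique increasing bijection onto $\{1,\dots,7\}$ preserves parities of pairwise sums, hence carries the parity-tournament structure on the block exactly to $\Pi(7)$, and the event ``$B_\ell$ induces a copy of $\Pi(7)$'' is exactly the event that all $21$ internal edges are oriented per the ``$i<j$'' convention with the correct $k$ of them reversed. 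The rest of the disjoint-blocks computation above then goes through verbatim with $m=\Theta(n)$ in place of $\lfloor n/7\rfloor$, and we conclude $(1-q)^{m}\to 0$.

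\textbf{Main obstacle.} The only real pitfall is the interaction between the parity-based definition of $\Pi(7)$ and the position-based labeling of $T(n,p)$: a naive embedding that ignores parities would not produce a copy of $\Pi(7)$ but of some other tournament. Once one commits to even-offset (equivalently, parity-respecting) blocks, everything reduces to the elementary fact that $(1-q)^{\Theta(n)}\to 0$ for the fixed constant $q=p^{k}(1-p)^{21-k}>0$, and the remaining details (counting disjoint blocks, verifying independence via disjointness of edge sets) are routine.
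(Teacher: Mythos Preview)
Your argument is correct and takes a more elementary route than the paper. The paper runs a full second-moment computation: it sets $X=\sum_{|A|=7}I_A$, shows $\mathbb{E}[X]=\binom{n}{7}q=\Theta(n^7)$, decomposes $\mathbb{E}[X^2]$ according to the overlap $r=|A\cap B|$, bounds the contribution $S_r$ by $O(n^{14-r})q^2$ for each $r\ge 1$, and concludes $\mathbb{P}(X>0)\to 1$ via Paley--Zygmund. Your disjoint-blocks argument bypasses all of this: restricting to $\Theta(n)$ vertex-disjoint $7$-blocks gives genuinely independent Bernoulli trials with success probability $q$, and $(1-q)^{\Theta(n)}\to 0$ finishes in one line. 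The second-moment route does buy more---it yields $X\sim\mathbb{E}[X]$, so there are in fact $\Theta(n^7)$ ordered copies of $\Pi(7)$---whereas your argument only certifies at least one. For the lemma as stated, your approach is cleaner.

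One remark: the ``parity subtlety'' you flag is a phantom, and you already had the correct picture in your first paragraph. For \emph{any} increasing $7$-tuple $(v_1<\dots<v_7)$, the event that $T(n,p)$ restricted to $\{v_1,\dots,v_7\}$ realises $\Pi(7)$ under $v_i\mapsto i$ asks, for each pair $i<j$ in $\{1,\dots,7\}$, that the edge between $v_i$ and $v_j$ be $(v_i,v_j)$ when $i+j$ is odd and $(v_j,v_i)$ when $i+j$ is even. Since $v_i<v_j$, these occur in $T(n,p)$ with probabilities $p$ and $1-p$ respectively, \emph{regardless of the parities of $v_i$ and $v_j$ themselves}. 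Hence every $7$-subset has the same probability $q=p^{12}(1-p)^{9}$, and your original consecutive blocks $B_\ell=\{7\ell-6,\dots,7\ell\}$ already work without any even-offset restriction. The parity rule is used only to \emph{define the target} $\Pi(7)$; it places no constraint on the host labels.
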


\begin{proof}
For each $7$-element subset $A=\{v_1,\dots,v_7\}\subset V_n$ with
\[
v_1<v_2<\cdots<v_7,
\]
let $I_A$ be the indicator random variable of the event that the tournament induced by $T(n,p)$ on $A$
is exactly $\Pi(7)$ under the order-preserving identification $v_i\mapsto i$ for $i=1,\dots,7$.

Let
\[
X_n=\sum_{\substack{A\subset V_n\\ |A|=7}} I_A,
\]
so that $X_n$ counts the number of induced copies of $\Pi(7)$ in $T(n,p)$.

Among the pairs $1\le i<j\le 7$, exactly $12$ satisfy $i+j$ odd and exactly $9$ satisfy $i+j$ even.
Therefore, for every $A$,
\[
\mathbb P(I_A=1)=p^{12}(1-p)^9=:q>0.
\]
Hence
\[
\mathbb E[X_n]=\binom{n}{7}q=\Theta(n^7).
\]

We now estimate the second moment.
Write
\[
\mathbb E[X_n^2]
=
\sum_{\substack{A,B\subset V_n\\ |A|=|B|=7}}
\mathbb E[I_A I_B].
\]
For $0\le r\le 7$, let
\[
S_r
=
\sum_{\substack{A,B\subset V_n\\ |A|=|B|=7\\ |A\cap B|=r}}
\mathbb E[I_A I_B].
\]
Then
\[
\mathbb E[X_n^2]=\sum_{r=0}^7 S_r.
\]

If $r=0$, then $A$ and $B$ are disjoint, so the relevant edge orientations are independent. Thus
\[
\mathbb E[I_A I_B]=\mathbb E[I_A]\mathbb E[I_B]=q^2,
\]
and therefore
\[
S_0=\binom{n}{7}\binom{n-7}{7}q^2.
\]
Since
\[
\frac{\binom{n-7}{7}}{\binom{n}{7}}\to 1,
\]
it follows that
\[
S_0=(1+o(1))\binom{n}{7}^2 q^2=(1+o(1))(\mathbb E[X_n])^2.
\]

Now fix $1\le r\le 7$.
The number of ordered pairs $(A,B)$ of $7$-subsets with $|A\cap B|=r$ is
\[
\binom{n}{r}\binom{n-r}{7-r}\binom{n-7}{7-r}=O(n^{14-r}).
\]
Also,
\[
0\le \mathbb E[I_A I_B]\le 1.
\]
Hence
\[
S_r=O(n^{14-r})=o(n^{14})=o\bigl((\mathbb E[X_n])^2\bigr),
\]
because $(\mathbb E[X_n])^2=\Theta(n^{14})$.

Combining the estimates for $S_0,\dots,S_7$, we obtain
\[
\mathbb E[X_n^2]=(1+o(1))(\mathbb E[X_n])^2.
\]
Therefore
\[
\operatorname{Var}(X_n)
=
\mathbb E[X_n^2]-(\mathbb E[X_n])^2
=
o\bigl((\mathbb E[X_n])^2\bigr).
\]

By Chebyshev's inequality,
\[
\mathbb P(X_n=0)
\le
\mathbb P\bigl(|X_n-\mathbb E[X_n]|\ge \mathbb E[X_n]\bigr)
\le
\frac{\operatorname{Var}(X_n)}{(\mathbb E[X_n])^2}
\longrightarrow 0
\qquad\text{as }n\to\infty.
\]
Thus
\[
\mathbb P(X_n>0)\longrightarrow 1,
\]
which means that $T(n,p)$ contains an induced copy of $\Pi(7)$ with probability tending to $1$.
\end{proof}

\begin{proof}[Proof of Theorem \ref{thm:random}]
By Lemma \ref{lem:existence}, with probability tending to $1$, the tournament $T(n,p)$ contains an induced copy of $\Pi(7)$.
On that event, Maker restricts all her moves to this copy and follows her winning strategy from Theorem \ref{thm:thresholdunbiased}.
Thus the directed triangle game on $T(n,p)$ is a Maker's win with probability tending to $1$ as $n\to\infty$.
\end{proof}

A natural future direction is to study the model $T(n,p(n))$ and determine the threshold behaviour of the game when the orientation parameter varies with $n$.

\section{Proof of our result stated in Subsection \ref{subsec:flipbias}}

Traditionally, bias in Maker-Breaker games is introduced by allowing Breaker additional moves per turn. Here we study a different mechanism called the \emph{flip-bias}. In the flip-biased Maker-Breaker game, we allow Breaker to preemptively flip the orientations of at most $\kappa(n)$ edges from the parity tournament before Maker plays her first move.

Fix an integer $k\ge 0$. Breaker first chooses a set $S$ with $|S|\le k$ unordered pairs $\{i,j\}$ with $1\le i<j\le n$ and reverses the orientation of each corresponding edge of $\Pi(n)$. Let $\Pi(n)_S$ denote the resulting tournament. Maker and Breaker then play the usual unbiased Maker-Breaker directed triangle game on $\Pi(n)_S$ as in Definition \ref{defi: MB directed triangle game}.
We write $\kappa^*(n)$ for the least integer $k$ such that Breaker has some choice of $S$ with $|S|\le 
k$ for which Breaker wins on $\Pi(n)_S$.

\subsection{An upper bound on the flip-bias threshold}\label{subsec: upper bound on flip-bias}

We prove the upper bound in Theorem \ref{thm:flip-bias} by explicitly turning $\Pi(n)$ into a transitive tournament. Since a transitive tournament has no directed cycle, it has no directed triangle, and the resulting game is a trivial win for Breaker.

\begin{proof}[Upper bound in Theorem \ref{thm:flip-bias}]
Let us first address the case where $n$ is odd, that is $n = 2m+1$. 
Making a tournament transitive is equivalent to finding an ordering of its vertices such that every edge is oriented forward with respect to that ordering (i.e.\ it is oriented from the vertex having the smaller index to the vertex having the larger index).
Consider the following ordering of the vertices of the parity tournament $\Pi(n)$:
\[
\sigma:\quad 1,\ 2m,\ 2m-2,\ \ldots,\ 2,\ 2m+1,\ 2m-1,\ \ldots,\ 3.
\]

\begin{figure}[htbp]
    \centering
    \includegraphics[width=0.90\textwidth]{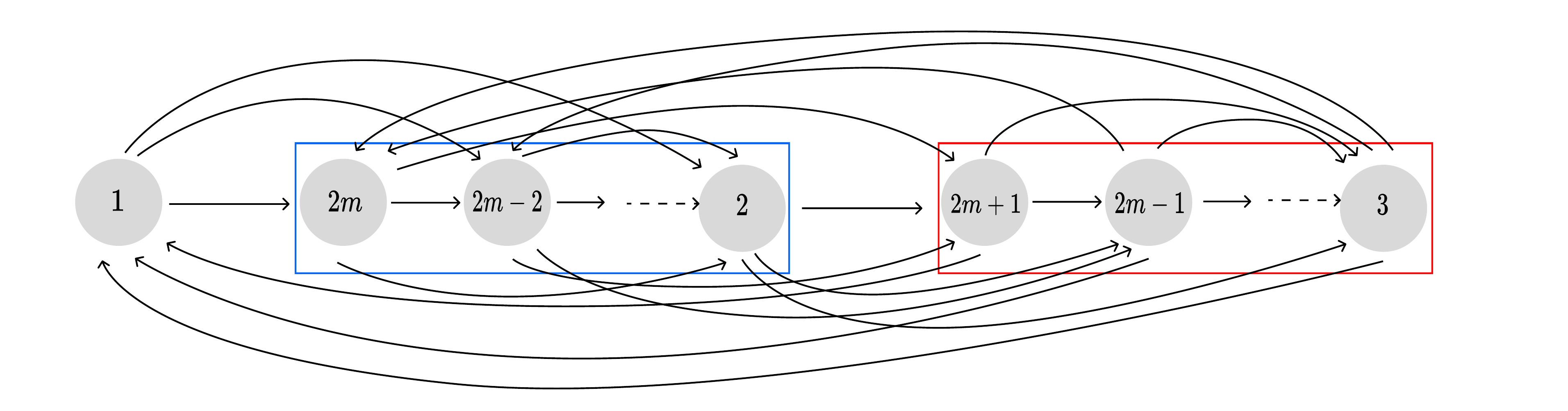}
    \caption{Parity tournament $\Pi(2m + 1)$: Edge orientations with respect to $\sigma$ before the flipping sequence}
    \label{fig:before_flips}
\end{figure}

We say that an edge from $i$ to $j$ \emph{points forward in $\sigma$}, if $i$ is indexed before $j$ in $\sigma$. We construct a set $S$ of edges of $\Pi(n)$ to flip so that every edge, after this flipping sequence, points forward in $\sigma$.

\noindent\textbf{Step 1.} For each odd vertex, say $o\in\{3,5,\dots,2m+1\}$, we flip the edge between $1$ and $o$ so that it now directs from $1$ to $o$. This would amount to exactly $m$ flips.

\noindent\textbf{Step 2.} For each even vertex $e=2a\in\{2,4,\ldots,2m\}$ and each odd $o\in\{3,5,\ldots,2a-1\}$, flip the edge between $e$ and $o$ so that it is now directed from $e$ to $o$. For a fixed $e=2a$, there are, $(a-1)$ eligible odd vertices $o$, hence in Step 2, we are required to flip $\sum_{a=1}^m (a-1)=\frac{m(m-1)}{2}$ edges.

Cumulatively, across both steps, the total number of edges flipped is
\[
|S|=m+\frac{m(m-1)}{2}=\frac{m(m+1)}{2}=\frac{n^2-1}{8}.
\]
We can readily verify that following these two steps, the resultant tournament is indeed transitive. Recall, from Definition \ref{defi:Parity rule}, that in $\Pi(n)$, edges between vertices of the same parity are oriented from the larger label to the smaller label, whereas edges between vertices of opposite parity are oriented from the smaller label to the larger label. The even vertices in $\sigma$ appear in a decreasing order and thus every edge between two even indexed vertices points forward by default. The odd vertices, barring $1$, i.e.\ the vertices in $\{3,5,\dots,2m +1\}$, appear in a decreasing order in $\sigma$ as well, and thus every between two odd indexed vertices points forward already. As for vertex $1$, the edges between $1$ and the even vertices point forward in $\sigma$ due to the parity rule for edge orientation, and edges between $1$ and the other odd vertices were addressed in the flips performed in Step 1. Thus, each edge from $1$ now points forward in $\sigma$. We are finally left with the edges between an even vertex $e$ and an odd vertex $o\ge 3$. Every even vertex appears before every such odd vertex in $\sigma$. If $e<o$ then $\Pi(n)$ already has the edge going from $e$ to $o$, that is forward in $\sigma$. If $o<e$ then $\Pi(n)$ has the edge directed from $o$ to $e$, and Step 2 flips exactly these edges to make them go from $e$ to $o$. Thus every even--odd edge points forward in $\sigma$ as well.
Therefore every edge of the perturbed tournament is oriented forward with respect to $\sigma$, so the perturbed tournament is transitive. Hence it contains no directed triangle, and Breaker wins. This proves that $\kappa^*(n)\le (n^2-1)/8$ for odd $n$.

The case of even $n$ is handled in the same way. Writing $n=2m$, consider the order
\[
\sigma:\quad 1,\ 2m,\ 2m-2, \dots,\ 2,\ 2m-1,\ 2m-3,\ \ldots,\ 3,
\]
and perform Step 1 only for odd $o\in\{3,5,\ldots,2m-1\}$ together with the same Step 2. The same parity checks show that the resulting tournament is transitive. The number of flipped edges is
\[
(m-1)+\sum_{a=1}^m (a-1)=(m-1)+\frac{m(m-1)}{2}=\frac{n^2}{8}+\frac{n}{4}-1.
\]
In particular, for all $n$ we obtain the asymptotic upper bound
\begin{equation}\label{eqn:upper}
    \kappa^*(n)\le \frac{n^2}{8}+O(n).
\end{equation}
\end{proof}

\begin{figure}[htbp]
    \centering
    \includegraphics[width=0.90\textwidth]{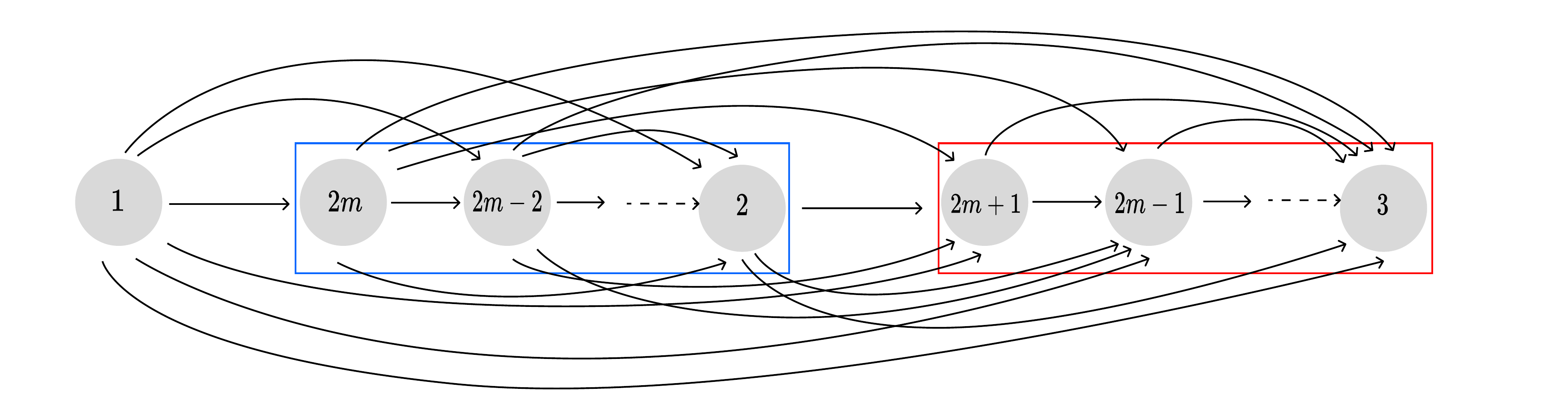}
    \caption{Transitive tournament $\Lambda(2m +1)$: Edge orientations with respect to $\sigma$ after the flipping sequence}
    \label{fig:after_flips}
\end{figure}

\begin{proof}[Proof of Proposition \ref{prop:linear-triangles-breakerwin}]
Start with a transitive tournament whose vertices are ordered
\[
v_1,v_2,\dots,v_n,
\]
so that every edge points forward with respect to this order.

Partition this sequence into consecutive disjoint blocks of size $3$, ignoring the final block if it has size $1$ or $2$.
For each full block
\[
(v_{3t+1},v_{3t+2},v_{3t+3}),
\]
reverse the single edge between $v_{3t+1}$ and $v_{3t+3}$.
Inside that block, the three edges now form a directed triangle.

No directed triangle uses vertices from more than one block.
Indeed, all edges between different blocks still point forward with respect to the original linear order, so any triple of vertices meeting at least two blocks has a global source and is therefore transitive.
Hence the only directed triangles are the ones created inside the full blocks, and there are exactly $\lfloor n/3\rfloor$ of them.

Breaker now uses the following response strategy.
Whenever Maker first claims an edge from some block, Breaker immediately claims a different edge from that same block.
Since each block contains exactly one directed triangle and the blocks are edge-disjoint, Maker can never claim all three edges of the triangle in any block.
Therefore Breaker wins.
\end{proof}

\subsection{A lower bound on the flip-bias threshold}\label{subsec: lower bound on flip-bias}

We now prove the lower bounds in Theorem \ref{thm:flip-bias}.
We use the following sufficient condition for Maker's win.

\begin{theo}[See \cite{beck1981positional}]\label{thm:beck}
Let $(X,\mathcal{F})$ be an $s$-uniform hypergraph and define
\[
\Delta_2(\mathcal{F})
=
\max\Bigl\{
\,|\{F\in\mathcal{F}:\{x,y\}\subset F\}| : x,y\in X,\ x\neq y
\Bigr\}.
\]
If
\begin{equation}
|\mathcal{F}|>2^{\,s-3}\,\Delta_2(\mathcal{F})\,|X|,\label{Maker's_win_Beck}
\end{equation}
then Maker has a winning strategy in the Maker-Breaker game on $(X,\mathcal{F})$.

\end{theo}

In the directed triangle game on a tournament $T(n)$, the ground set is $X=E(T(n))$ and
$\mathcal F$ is the family of directed triangles of $T(n)$.
Since the game is $3$-uniform and any two directed edges lie together in at most one directed triangle, we have
$\Delta_2(\mathcal F)=1$.
Therefore Theorem \ref{thm:beck} immediately yields the following.

\begin{lem}\label{lem:many-triangles-maker-win}
If a tournament $T(n)$ has more than $\binom{n}{2}$ directed triangles, then Maker wins the directed triangle game on $T(n)$.

\end{lem}
\begin{proof}
Apply Theorem \ref{thm:beck} with $s=3$, $X=E(T(n))$, and $\mathcal F$ equal to the family of directed triangles of $T(n)$. In this setting, any two directed edges belong to at most one directed triangle, so $\Delta_2(\mathcal F)=1$, and $|X|=\binom{n}{2}$. Hence the criterion \eqref{Maker's_win_Beck} becomes
\[
|\mathcal F|>|X|=\binom{n}{2}.
\]
Therefore, if $T(n)$ has more than $\binom{n}{2}$ directed triangles, then Maker has a winning strategy.
\end{proof}

We next express the number of directed triangles in terms of the score sequence.

\begin{lem}\label{lem:triangle-count-by-scores}
Let $T(n)$ be a tournament on $n$ vertices, and let $d_{T(n)}^+(v)$ denote the out-degree of a vertex $v$ in $T(n)$.
Define
\[
\delta_{T(n)}(v)=d_{T(n)}^+(v)-\frac{n-1}{2}.
\]
Then the number $c_3(T(n))$ of directed triangles in $T(n)$ satisfies
\[
c_3(T(n))
=
\binom{n}{3}
-
\sum_v \binom{d_{T(n)}^+(v)}{2}
=
\frac{n(n^2-1)}{24}
-\frac12\sum_v \delta_{T(n)}(v)^2.
\]
\end{lem}

\begin{proof}
Every $3$-subset of vertices either forms a directed triangle or a transitive triangle.
A transitive triangle has a unique source, namely the unique vertex of out-degree $2$ inside that triangle.
Hence the number of transitive triangles equals
\[
\sum_v \binom{d_{T(n)}^+(v)}{2},
\]
which proves the first identity.

For the second identity, expand
\[
\binom{d_{T(n)}^+(v)}{2}
=
\frac12 d_{T(n)}^+(v)\bigl(d_{T(n)}^+(v)-1\bigr),
\]
use
\[
\sum_v d_{T(n)}^+(v)=\binom{n}{2},
\]
and write
\[
d_{T(n)}^+(v)=\frac{n-1}{2}+\delta_{T(n)}(v).
\]
Since
\[
\sum_v \delta_{T(n)}(v)=0,
\]
we obtain
\[
\sum_v \binom{d_{T(n)}^+(v)}{2}
=
\frac{n(n-1)(n-3)}{8}
+
\frac12\sum_v \delta_{T(n)}(v)^2.
\]
Subtracting this from $\binom{n}{3}$ gives
\[
c_3(T(n))
=
\frac{n(n^2-1)}{24}
-
\frac12\sum_v \delta_{T(n)}(v)^2,
\]
as claimed.

\end{proof}

We now control how much the quadratic term can change after a bounded number of flips.

\begin{lem}\label{lem:delta-sq-odd}
Let $n=2m+1$ be odd and let $S$ be any set of flipped edges with $|S|\le k$. Let $\Pi(n)_S$ denote the resultant tournament after Breaker flips these $S$ edges.
Then the tournament $\Pi(n)_S$ has at least
\[
\frac{n(n^2-1)}{24}-mk
\]
directed triangles.

\end{lem}

\begin{proof}
Write $T=\Pi(n)_S$.
Since $\Pi(n)$ is regular when $n$ is odd, every vertex has out-degree $m$ in $\Pi(n)$.
Set
\[
x(v)=d_T^+(v)-m.
\]
Since $(n-1)/2=m$, we have
\[
\delta_T(v)=d_T^+(v)-\frac{n-1}{2}=d_T^+(v)-m=x(v)
\]
for every vertex $v$.

Each flip reverses exactly one edge, so it changes the out-degrees of exactly two vertices by $\pm1$.
Therefore after the $S$ edges have been flipped, we get
\[
\sum_v |x(v)|\le 2|S|\le 2k.
\]
Also, since $0\le d_T^+(v)\le 2m$, we have $|x(v)|\le m$ for all $v$.
Hence
\[
\sum_v x(v)^2\le m\sum_v |x(v)|\le 2mk.
\]
Applying Lemma \ref{lem:triangle-count-by-scores}, we obtain
\[
c_3(T)
=
\frac{n(n^2-1)}{24}
-
\frac12\sum_v x(v)^2
\ge
\frac{n(n^2-1)}{24}
-mk.
\]
\end{proof}

\begin{lem}\label{lem:delta-sq-even}
Let $n=2m$ be even and let $S$ be any set of flipped edges with $|S|\le k$. Let $\Pi(n)_S$ denote the resultant tournament after Breaker flips these $S$ edges.
Then the tournament $\Pi(n)_S$ has at least
\[
\frac{n^3-4n}{24}-(m+1)k
\]
directed triangles.
\end{lem}

\begin{proof}
Write $T=\Pi(n)_S$.
Let $d_0^+(v)$ and $d_T^+(v)$ denote the out-degrees of $v$ in $\Pi(n)$ and $T$, respectively.
Set
\[
\delta_0(v)=d_0^+(v)-\frac{n-1}{2},
\quad
\delta(v)=d_T^+(v)-\frac{n-1}{2},
\quad
x(v)=d_T^+(v)-d_0^+(v).
\]
Then
\[
\delta(v)=\delta_0(v)+x(v).
\]

Since $\Pi(n)$ is near-regular when $n$ is even, every vertex has out-degree either $m$ or $m-1$.
Thus, for all $v$,
\[
\delta_0(v)\in\left\{-\frac12,\frac12\right\}
\]

Each flip changes two out-degrees by $\pm1$, so
\[
\sum_v x(v)=0,
\qquad
\sum_v |x(v)|\le 2|S|\le 2k.
\]
Moreover, for all $v$,
\[
|x(v)|\le m
\]
because $d_0^+(v)\in\{m-1,m\}$ and $0\le d_T^+(v)\le 2m-1$.

Now
\[
\sum_v \delta(v)^2-\sum_v \delta_0(v)^2
=
2\sum_v \delta_0(v)x(v)+\sum_v x(v)^2.
\]
Using $|\delta_0(v)|\le 1/2$ and $\sum_v |x(v)|\le 2k$, we get
\[
\sum_v \delta_0(v)x(v)\le \frac12\sum_v |x(v)|\le k.
\]
Also,
\[
\sum_v x(v)^2\le m\sum_v |x(v)|\le 2mk.
\]
Therefore
\[
\frac12\left(\sum_v \delta(v)^2-\sum_v \delta_0(v)^2\right)
\le k+mk=(m+1)k.
\]

By Lemma \ref{lem:triangle-count-by-scores},
\[
c_3(T)
=
\frac{n(n^2-1)}{24}
-\frac12\sum_v \delta(v)^2.
\]
Since $\delta_0(v)\in\{-1/2,1/2\}$ for all $v$, we have
\[
\sum_v \delta_0(v)^2=\frac n4,
\]
and hence
\[
\frac{n(n^2-1)}{24}-\frac12\sum_v \delta_0(v)^2
=
\frac{n(n^2-1)}{24}-\frac n8
=
\frac{n^3-4n}{24}.
\]
Combining the two expressions, we get
\[
c_3(T)\ge \frac{n^3-4n}{24}-(m+1)k,
\]
as claimed.
\end{proof}

We can now complete the lower bounds in Theorem \ref{thm:flip-bias}.

\begin{proof}[Proof of the lower bounds in Theorem \ref{thm:flip-bias}]
Suppose first that $n=2m+1$ is odd.
By Lemma \ref{lem:delta-sq-odd}, every tournament $\Pi(n)_S$ with $|S|\le k$ has at least
\[
w(n)-mk
\]
directed triangles.
If
\[
k<\frac{n(n-11)}{12}
=
\frac{(2m+1)(m-5)}{6},
\]
then
\[
w(n)-mk
>
\frac{(2m+1)m(m+1)}{6}
-
m\cdot \frac{(2m+1)(m-5)}{6}
=
m(2m+1)
=
\binom{n}{2}.
\]
Hence Lemma \ref{lem:many-triangles-maker-win} implies that Maker wins on $\Pi(n)_S$ for every such $S$.
Therefore
\[
\kappa^*(n)\ge \frac{n(n-11)}{12}
\qquad\text{for odd }n\ge 11.
\]

Now suppose that $n=2m$ is even.
By Lemma \ref{lem:delta-sq-even}, every tournament $\Pi(n)_S$ with $|S|\le k$ has at least
\[
w(n)-(m+1)k
\]
directed triangles.
If
\[
k\le \frac{n(n-14)}{12}
=
\frac{m(m-7)}{3},
\]
then
\[
w(n)-(m+1)k
\ge
\frac{m^3-m}{3}
-
(m+1)\frac{m(m-7)}{3}
=
2m^2+2m
=
\binom{n}{2}+3m
>
\binom{n}{2}.
\]
Again Lemma \ref{lem:many-triangles-maker-win} implies that Maker wins.
Hence
\[
\kappa^*(n)\ge \frac{n(n-14)}{12} + 1
\qquad\text{for even }n\ge 14.
\]

Together with the upper bound from Subsection \ref{subsec: upper bound on flip-bias}, this completes the proof of Theorem \ref{thm:flip-bias}.
\end{proof}

\section*{Acknowledgments}
We are grateful to Prof.\ Michael Krivelevich for his insightful comments and for guiding us toward the relevant literature. This work was carried out as part of the first author's Master's thesis project at the Indian Institute of Science Education and Research Pune.
\printbibliography
\end{document}